\def\ps@pprintTitle{%
 \let\@oddhead\@empty
 \let\@evenhead\@empty
 \def\@oddfoot{}%
 \let\@evenfoot\@oddfoot}
\newtheorem{theorem}{Theorem}[section]
\newtheorem{theoreme}{Th\'eor\`eme}[section]
\newtheorem{e-proposition}[theoreme]{Proposition}
\newtheorem{e-definition}[theoreme]{Definition\rm}
\newtheorem{lemme}[theoreme]{Lemme}
\newtheorem{corollaire}[theoreme]{Corollaire}
 \newtheorem{Fait}{\textbf{Fait}}[section]
\renewcommand{\theequation}{\arabic{equation}}
\begin{document}
\nocite{*}
\sloppy

\begin{frontmatter}

\title{Morphologie des posets (-1)-critiques}
\author{Rachid Sahbani\fnref{}}
\address{Sfax University, Faculty of Sciences of Sfax, Sfax, Tunisia}
\ead{rachidsahbani5@gmail.com}

\begin{abstract}

Let $G=(V,A)$ be a digraph. For $X\subseteq V$, the subdigraph of $G$ induced by $X$ is denoted by $G[X]$. A subset $I$ of $V$ is an  interval  of $G$ if for every $a,b \in I$ and $x \in V \setminus I$, $(x,a) \in A$ if and only if $(x,b) \in A$, and similarly for $(a,x)$ and $(b,x)$. The trivial intervals of $G$ are $\varnothing$, $V$ and $\lbrace x\rbrace$, where $x\in V$. The digraph $G$ is indecomposable if $\vert V(G)\vert\geqslant 3$ and all its intervals are trivial. Given an indecomposable digraph $G$, a vertex $x$ of $G$ is critical, if the induced subdigraph $G[V(G) \setminus \{x\}]$ is decomposable. The digraph $G$ is said to be (-1)-critical if it admits a single non-critical vertex. A poset (or a strict partial order) is a transitive digraph. 
In this paper, We characterize the (-1)-critical posets.

\end{abstract}
\begin{keyword}
 Poset \sep Interval \sep Indecomposable \sep Critical.  
\MSC[2010] 05C20 \sep  05C75 
\end{keyword}
\end{frontmatter}
\section*{Résumé}  

\'Etant donné un digraphe $G$, le sous-digraphe de $G$ induit par une partie $X$ de $V(G)$ est noté $G[X]$. Une partie $I$ de $V(G)$ est un intervalle de $G$ lorsque pour tous $a,b\in I$ et $x\in V(G)\setminus I$,  $(x,a) \in A(G)$ si et seulement si $(x,b) \in A(G)$, et de même pour  $(a,x)$ et $(b,x)$.  Par exemple, $\varnothing$, $V(G)$ et $\{x\}$ ($x\in V(G)$) sont des intervalles de $G$, appelés intervalles triviaux.  Un digraphe, à au moins 3 sommets,  est indécomposable lorsque tous ses intervalles sont triviaux, sinon il est décomposable. 
\'Etant donné un digraphe indécomposable  $G$, un sommet $x$ de $G$ est critique si le sous-digraphe $G[V(G)\setminus \{x\}]$ est décomposable. le  digraphe $G$ est dit (-1)-critique lorsqu'il  admet un seul sommet non critique. Un poset (ou ordre partiel (strict)) est un digraphe transitif.
Dans ce papier, nous caractérisons morphologiquement les posets (-1)-critiques.

 \section{Introduction}
 
 \subsection{\textbf{Terminologie}}
  \textit{Un graphe} $G$ est un couple $(V(G),E(G))$ de deux ensembles finis et disjoints tels que  $V(G)\neq \varnothing$ et $E(G)\subseteq \lbrace \lbrace x,y\rbrace:x\neq y\in V(G)\rbrace$.  $V(G)$ est l'ensemble des \textit{sommets} de $G$  et $E(G)$ est l'ensemble des \textit{arêtes} de $G$. On note $G=(V,E)$. Le graphe  \textit{ complémentaire} d'un graphe $G$  est le graphe  $\overline{G}$ défini sur le même ensemble de sommets $V(G)$ par $E(\overline{G})=\binom{V(G)}{2}\setminus E(G)$. Par exemple, étant donné un ensemble fini  $V$, le complémentaire du digraphe  complet $K_{V}=(V, (V \times V) \setminus \{(x,x): x \in V\})$ est le digraphe vide  $\overline{K_V} = (V,\varnothing)$. 
 
Un {\it digraphe }  $D$  est la donnée d'un ensemble fini $V = V(D)$ de {\it sommets} avec un ensemble $A = A(D)$  d'{\it arcs}, où un arc est un couple de sommets distincts. \'Etant donné un digraphe $D=(V,A)$, pour toute partie $X$ de $V$ est associé le sous-digraphe $D[X]=(X,A\cap(X\times X))$ {\it induit } par $X$. Le sous-digraphe induit $D[V \setminus X]$ est aussi noté $D - X$, et est noté $D - x$  lorsque $X = \{x\}$, où $x \in V$. Le \textit{dual}  $D^{\star}=(V,A^{\star})$ de $D$ est le digraphe défini sur $V$ comme suit: pour tous  $x\neq y\in V$, $(x,y)\in A^{\star}$\ si et seulement si\  $(y,x)\in A$. 

\'Etant donnés deux digraphes (resp. graphes) $D=(V,A)$ (resp. $G=(V,E)$ ) et $D'=(V',A')$ (resp. $G'=(V',E')$), un \textit{isomorphisme} de $D$ (resp. $G$) sur $D'$ (resp. $G'$) est une bijection $f$ de $V$ sur $V'$ telle que pour tous $x\neq y\in V$, $(x,y)\in A$ (resp. $\lbrace x,y\rbrace\in E$ ) si et seulement si $(f(x),f(y))\in A'$ (resp. $\lbrace f(x),f(y)\rbrace\in E'$). Lorsqu'un tel isomorphisme existe, on dit que $D$ (resp. $G$) et $D'$ (resp. $G'$) sont \textit{isomorphes} et on note $D\simeq D'$ (resp. $G\simeq G'$).

Soient $D$ et $D'$ deux digraphes.  Le digraphe  $D$ \textit{abrite} le digraphe  $D'$  lorsqu'il existe une partie $X$ de $V(D)$ telle que $D'\simeq D[X]$. Lorsque  $D'$ ne s'abrite pas dans $D$, on dit que $D$ \textit{omet} $D'$. 

Un \textit{poset} (ou un ordre partiel strict) $\mathcal{O}$ est un digrpahe transitif, c'est à dire pour tous $x,y,z\in V(\mathcal{O})$, si $(x,y)\in A(\mathcal{O})$ et $(y,z)\in A(\mathcal{O})$, alors $(x,z)\in A(\mathcal{O})$. \`A chaque poset $\mathcal{O}$ est associé son \textit{graphe de comparabilité} 
$\overset{\longleftrightarrow}{\mathcal{ O}}$ défini sur  $V(\overset{\longleftrightarrow}{\mathcal{ O}})=V(\mathcal{O})$ comme suit. Pour tous sommets distincts $x$ et $y$ de $\overset{\longleftrightarrow}{\mathcal{ O}}$, $\{x,y\}\in E(\overset{\longleftrightarrow}{\mathcal{ O}})$ si $(x,y)\in A(\mathcal{O})$ ou $(y,x)\in A(\mathcal{O})$. Comme un arc d'un digraphe  est un  couple de sommets \underline{ distincts}, un  poset peut être encore défini comme étant une orientation transitive d'un graphe, c'est à dire, une orientation d'un graphe  
\footnote{$-$\'Etant donné un graphe $G$, une \textit{orientation} de $G$  est le digraphe obtenu a partir de $G$ en remplaçant chaque arête $\{x,y\}$ de $G$ par exactement un des deux arcs possibles $(x,y)$ et $(y,x)$.

$-$ Un \textit{tournoi}  est un digraphe $T$ tel que pour tous sommets distincts $x$ et $y$ de $V(T)$, $(x,y)\in A$ si et seulement si $(y,x)\notin A$. Un tournoi est alors une orientation d'un graphe complet.} de façon que le digraphe résultant soit transitif. Remarquons que  tous les graphes n'admettent pas d'orientations transitives (prendre par exemple le cycle $C_5$). Un \textit{graphe de comparabilité} est un graphe admettant des orientations transitives. Par exemple, le graphe complet $K_n$  défini sur $\{0,\ldots n-1\}$, où $n\geqslant 2$,  est un graphe de comparabilité. les $n!$ orientations transitives de $K_n$ sont des ordres totaux \footnote{  Un \textit{ordre total}  est un tournoi transitif. Pour tout entier $n\geqslant 2$, l'\textit{ordre total usuel}  $O_n$ est le tournoi défini sur $\{0,\ldots, n-1\}$ par $A(O_n)=\{(i,j): 0\leqslant i< j\leqslant n-1\}$.} définis sur $\{0,\ldots n-1\}$. 

Clairement, la transitivité dans la classe des digraphes est une propriété héréditaire. Il s'ensuit qu'un sous-digraphe (induit) d'un poset est un poset.

\subsection{\textbf{Préliminaires}}
Soit $D$ un digraphe. Une partie $X$ de $V(D)$ est un {\it intervalle} \cite{I,S.T} (ou {\it clan} \cite{ER} ou {\it module} \cite{Muller}) de $D$ si pour tous $a,b\in I$ et $x \in V(D) \setminus I$, on a  $(a,x)\in A(D)$ si et seulement si $(b,x)\in A(D)$, et de m\^eme pour $(x,a)$ et $(x,b)$. Clairement, $\varnothing$, $V(D)$ et ses singletons sont des intervalles de $D$, appelés intervalles {\it triviaux}.  Le digraphe $D$ est {\it indécomposable} \cite{I,S.T} (ou {\it premier} \cite{CH2} ou {\it primitif} \cite{ER}) lorsque tous ses intervalles sont triviaux; sinon il est {\it décomposable}. Clairement un digraphe $D$ (resp. un graphe $G$) et son dual $D^{\star}$ (resp. son complémentaire $\overline{G}$) partagent les mêmes intervalles. En particulier le digraphe $D$ (resp. graphe $G$) est indécomposable si et seulement si son dual $D^{\star}$ (resp. son complémentaire $\overline{G}$) est indécomposable.

Le théorème suivant est un résultat important sur  l'aspect
 héréditaire\\
  ascendant de l'indécomposabilité. 
\begin{theoreme} \cite{S.T} \label{ER +2}
\'Etant donné un digraphe indécomposable $D$, pour toute 
partie $X$ de $V(D)$ telle que $3 \leq |X| \leq |D| -2$ et $D[X]$ est indécomposable, il existe
$x \neq y \in V(D) \setminus X$ tels que $D[X \cup \{x,y\}]$ est indécomposable.
\end{theoreme}
Comme tout digraphe indécomposable, à au moins trois sommets, abrite un digraphe indécomposable à trois ou quatre sommets \cite{ER}, nous obtenons, en appliquant Théorème~\ref{ER +2} plusieurs fois,  ce que suit.  

\begin{corollaire} \label{-1-2}
\textit{Tout digraphe indécomposable d'ordre $n$ ($n \geqslant 5$) abrite un digraphe indécomposable d'ordre $n-1$ ou $n-2$}.
\end{corollaire}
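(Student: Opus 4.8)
Le plan est de partir d'un petit sous-digraphe indécomposable, puis de l'agrandir de deux sommets à la fois à l'aide du Théorème~\ref{ER +2}, en contrôlant la parité de son ordre. Soit $D$ un digraphe indécomposable d'ordre $n \geqslant 5$. Puisque $n \geqslant 3$, on commencerait par invoquer le résultat de \cite{ER} pour obtenir une partie $X_0$ de $V(D)$ telle que $D[X_0]$ soit indécomposable et $|X_0| \in \{3,4\}$.

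La deuxième étape consiste à itérer le Théorème~\ref{ER +2}. Tant que la partie courante $X$ vérifie $3 \leqslant |X| \leqslant n-2$ et que $D[X]$ est indécomposable, le théorème fournit deux sommets $x \neq y$ de $V(D) \setminus X$ tels que $D[X \cup \{x,y\}]$ demeure indécomposable. En répétant l'opération, on construirait une suite croissante $X_0 \subseteq X_1 \subseteq \cdots$ avec $D[X_i]$ indécomposable et $|X_{i+1}| = |X_i| + 2$. On disposerait ainsi d'un sous-digraphe induit indécomposable d'ordre $s$ pour tout entier $s$ de même parité que $|X_0|$ compris entre $|X_0|$ et $n$, car atteindre l'ordre $s$ ne nécessite d'appliquer le théorème qu'à des parties d'ordre au plus $s-2 \leqslant n-2$.

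La dernière étape est un argument de parité. Parmi les deux entiers consécutifs $n-2$ et $n-1$, exactement l'un, que l'on note $m$, a la même parité que $|X_0|$. Comme $n \geqslant 5$, on a $|X_0| \leqslant m$ et $m \leqslant n-1 < n$, de sorte que $m$ figure dans la progression construite ci-dessus; $D$ abrite donc un sous-digraphe indécomposable d'ordre $m \in \{n-1, n-2\}$, ce qui conclut. Le cas limite $n=5$ se règle d'emblée, puisque $\{n-2,n-1\} = \{3,4\}$ contient déjà $|X_0|$ (aucune application du théorème n'est alors nécessaire).

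Le principal point de vigilance sera le respect de l'hypothèse $|X| \leqslant n-2$ à chaque application du Théorème~\ref{ER +2}: on ne doit jamais chercher à agrandir une partie d'ordre $n-1$ ou $n$, ce qui force l'arrêt à l'ordre $m$ et éclaire pourquoi on ne peut garantir qu'un ordre parmi $n-1$ et $n-2$. L'argument de parité, bien qu'élémentaire, mérite d'être mené avec soin pour garantir que l'ordre visé $m$ tombe effectivement dans la bonne classe de parité et reste accessible depuis $|X_0|$.
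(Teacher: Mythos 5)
Votre démonstration est correcte et suit exactement la démarche du papier: partir, d'après \cite{ER}, d'un sous-digraphe indécomposable à $3$ ou $4$ sommets, puis appliquer le Théorème~\ref{ER +2} de façon répétée en ajoutant deux sommets à chaque étape, l'argument de parité garantissant qu'on atteint l'ordre $n-1$ ou $n-2$ sans jamais violer la contrainte $|X|\leqslant n-2$. Votre rédaction est même plus détaillée que celle du papier, qui se contente d'esquisser cet argument en une phrase avant d'énoncer le corollaire.
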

Ce dernier résultat amène alors à introduire la notion de criticité. Un sommet {\it critique} d'un digraphe indécomposable $D$ est  un sommet $x$ de $D$ tel que le digraphe $D-x$ est décomposable.  \'Etant donné un digraphe indécomposable $D$, à au moins 4 sommets, $D$ est {\it critique} \cite{Boniz,S.T}  lorsque tous ses sommets sont critiques. Les digraphes critiques ont été caractérisés indépendamment  par Schmerl et Trotter en 1993 \cite{S.T}, et par  Bonizonni en 1994 \cite{Boniz}.

 En 2009, Boudabbous et Ille \cite{BI} ont donné un autre type d'hérédité de la manière suivante.
 \begin{theoreme} \cite{BI}   \'Etant donné un digraphe indécomposable $D$ à au moins sept sommets et admettant au moins deux sommets non critiques, $D$ admet un sommet $x$ tel que $D-x$ est aussi indécomposable est non critique.
 \end{theoreme} 
  Ceci indique que l'indécomposabilité non critique est une propriété héréditaire pour les digraphes indécomposables (à au moins 7 sommets) admettant au moins deux sommets non critiques. 
   Ce dernier  résultat avait amené ses auteurs à poser le problème de caractérisation des digraphes indécomposables admettant un unique sommet non critique,  appelés digraphes {\it (-1)-critiques} \cite{BBD article,houmem}. En 2015, les auteurs de \cite{houmem} ont donné un procédé de construction des digraphes (-1)-critiques sans plus de précision sur leurs morphologie. En 2007, les auteurs de \cite{BBD article} ont donné une caractérisation morphologique des tournois \\
   (-1)-critiques. En 2017, avec une relecture de \cite{houmem}, H. Belkhechine a donné  une caractérisation morphologique des graphes (-1)-critiques \cite{graphes (-1)-critiques}. Tout comme les tournois et  les graphes, les posets  (-1)-critiques présentent un intér\^et particulier. Nous donnons, dans cet article, une caractérisation morphologique complète des posets (-1)-critiques.

 Le théorème suivant est fondamental pour notre caractérisation des posets 
(-1)-critiques. 
\begin{theoreme}(Gallai \cite{strong interval, strong intervals})\label{poset indec ssi comp indec}
Un poset $\mathcal{O}$ est indécomposable si et seulement si son graphe de comparabilité $\overset{\longleftrightarrow}{\mathcal{ O}}$ est indécomposable.
\end{theoreme}

Comme les graphes à 3 sommets sont tous décomposables, d'après le théorème~\ref{poset indec ssi comp indec}, tous les posets à 3 sommets sont décomposables. De plus, comme, à isomorphisme près, le seul graphe indécomposable à 4 sommets est le chemin $P_4= (\{0,1,2,3\}, \{\{0,1\}, \{1,2\}, \{2,3\}\})$, d'après  Théorème~\ref{poset indec ssi comp indec}, à isomorphisme près, les seuls posets indécomposables à 4 sommets sont $\overrightarrow{P_4}$ et son dual $\overrightarrow{P_4}^{\star}$, où $\overrightarrow{P_4}$ est le poset défini  sur  $\{0,1,2,3,\}$ par  $A(\overrightarrow{P_4}) = \{(0,1), (2,1), (2,3)\}$. Clairement, ces deux derniers posets sont critiques. Nous disons alors qu'un poset indécomposable est (-1)-critique, lorsqu'il est à au moins 5 sommets, et ne possède qu'un seul sommet non critique. \'Etant donné un  poset indécomposable $\mathcal{O}$ et un sommet $x$ de $\mathcal{O}$, le poset $\mathcal{O}$ est \textit{(-1)-critique en $x$}\index{poset (-1)-critique en  $x$} si $x$ est le seul sommet non critique de $\mathcal{O}$.
 
De l'hérédité de la transitivité, le théorème~\ref{poset indec ssi comp indec} implique le résultat suivant.
\begin{corollaire}\label{corollaire des posets (-1)-critiques} Soit  $\mathcal{O}$ un poset indécomposable à au moins 5 sommets et soit $x$ un sommet de $\mathcal{O}$. Le poset $\mathcal{O}$ est (-1)-critique en $x$ si et seulement si son graphe de comparabilité $\overset{\longleftrightarrow}{\mathcal{ O}}$ est (-1)-critique en $x$.
\end{corollaire}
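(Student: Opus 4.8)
La démarche que je suivrais repose entièrement sur le Théorème~\ref{poset indec ssi comp indec} (Gallai), appliqué non pas au seul poset $\mathcal{O}$, mais à chacun des sous-posets obtenus en retirant un sommet. Le point de départ est une observation élémentaire de commutation: pour tout sommet $y$ de $\mathcal{O}$, le graphe de comparabilité du poset $\mathcal{O} - y$ coïncide avec $\overset{\longleftrightarrow}{\mathcal{O}} - y$. En effet, puisque la transitivité est héréditaire, $\mathcal{O} - y$ est un poset; et pour deux sommets distincts $a, b \in V(\mathcal{O}) \setminus \{y\}$, l'arête $\{a,b\}$ figure dans $\overset{\longleftrightarrow}{\mathcal{O} - y}$ si et seulement si $(a,b) \in A(\mathcal{O})$ ou $(b,a) \in A(\mathcal{O})$, ce qui est précisément la condition d'appartenance de $\{a,b\}$ à $\overset{\longleftrightarrow}{\mathcal{O}} - y$. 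Cette identité ne dépend que du fait que $a$ et $b$ diffèrent de $y$.

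Je raisonnerais ensuite sommet par sommet. Comme $\mathcal{O}$ est d'ordre au moins $5$, chaque $\mathcal{O} - y$ est d'ordre au moins $4$, de sorte que l'indécomposabilité y est bien définie. Le Théorème~\ref{poset indec ssi comp indec} appliqué à $\mathcal{O} - y$ donne alors: $\mathcal{O} - y$ est indécomposable si et seulement si $\overset{\longleftrightarrow}{\mathcal{O} - y} = \overset{\longleftrightarrow}{\mathcal{O}} - y$ l'est; de façon équivalente, $\mathcal{O} - y$ est décomposable si et seulement si $\overset{\longleftrightarrow}{\mathcal{O}} - y$ est décomposable. Autrement dit, un sommet $y$ est critique dans $\mathcal{O}$ si et seulement s'il est critique dans $\overset{\longleftrightarrow}{\mathcal{O}}$. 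Les deux digraphes possèdent donc exactement le même ensemble de sommets non critiques.

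Il resterait à rassembler les conditions. Par le Théorème~\ref{poset indec ssi comp indec} appliqué cette fois à $\mathcal{O}$ lui-même, $\mathcal{O}$ est indécomposable si et seulement si $\overset{\longleftrightarrow}{\mathcal{O}}$ l'est, et les deux digraphes ont le même ordre (au moins $5$). Joint à l'égalité des ensembles de sommets non critiques obtenue ci-dessus, ceci donnerait immédiatement que $x$ est l'unique sommet non critique de $\mathcal{O}$ si et seulement s'il est l'unique sommet non critique de $\overset{\longleftrightarrow}{\mathcal{O}}$, soit exactement la conclusion voulue. Je ne prévois pas de difficulté réelle dans cet argument: le seul point à soigner est la justification de la commutation $\overset{\longleftrightarrow}{\mathcal{O} - y} = \overset{\longleftrightarrow}{\mathcal{O}} - y$, car c'est elle qui autorise l'emploi uniforme du Théorème~\ref{poset indec ssi comp indec} sur tous les sous-posets à un sommet retiré, et ramène ainsi le corollaire à une simple relecture de la définition de la criticité.
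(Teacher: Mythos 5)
Votre démonstration est correcte et suit exactement la voie du papier, qui déduit le corollaire du Théorème~\ref{poset indec ssi comp indec} et de l'hérédité de la transitivité. Vous ne faites qu'expliciter l'argument laissé implicite, en particulier la commutation $\overset{\longleftrightarrow}{\mathcal{O} - y} = \overset{\longleftrightarrow}{\mathcal{O}} - y$ qui permet d'appliquer le théorème de Gallai à chaque sous-poset $\mathcal{O}-y$.
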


  Il en résulte du corollaire~\ref{corollaire des posets (-1)-critiques}, que les posets $(-1)$-critiques sont les orientations transitives des graphes $(-1)$-critiques qui sont de comparabilité. Il faut donc parmi les graphes $(-1)$-critiques voir ceux qui sont de comparabilité pour une éventuelle caractérisation des posets $(-1)$-critiques. 
  Nous rappelons alors, dans ce que suit, la caractérisation des graphes (-1)-critiques~\cite{graphes (-1)-critiques}. 

Pour tout entier $n \geqslant 2$, le graphe $G_{2n}$ désigne le graphe critique~\cite{S.T}  défini sur $\{0, \ldots, 2n-1\}$ par $E(G_{2n}) = \{\{2i,2j+1\} : 0 \leq i \leq j \leq n-1 \}$.
  Le graphe {\it scindé}\footnote{$-$Un graphe  est dit scindé (en anglais: split graph) lorsque l'ensemble de ses sommets 
  
 \hspace{0.5cm} peut être partitionné en une clique et un 
 stable.
  
  \vspace{0.25cm}
 \hspace{0.18cm} $-$ Les sommets impairs forment une clique de $G'_{2n}$ et les sommets pairs en forment un 
 
 \hspace{0.5cm} stable.  }  $G'_{2n}$  associé au graphe critique $G_{2n}$  intervient dans la description des graphes 
  (-1)-critiques. Le graphe scindé $G'_{2n}$  est le graphe défini sur $\{0, \ldots, 2n-1\}$ par: $E(G'_{2n}) = E(G_{2n}) \cup \{\{2p-1, 2q-1\}: 1\leq p < q \leq n\}.$ 
  
   Pour tout entier naturel $n$, désignons par $t_{n}$  l'application injective de $\mathbb{N}$ sur $\mathbb{N}$, définie par: $p\longmapsto p+n$. \'Etant donn\'e un digraphe $D$ dont les sommets sont des entiers naturels, on note $t_n(D)$ le digraphe d\'efini sur  $t_n(V(D))$ par : $ A(t_n(D)) = \{(t_n(x), t_n(y)): (x,y) \in A(D)\}$.
  
 Maintenant, Considérons $k$  entiers strictement positifs  $n_1, \ldots, n_k$ ($k\geqslant 2$), posons $s_0 = 0$, et pour tout $i \in \{1, \ldots, k\}$, $s_i = n_1 + \cdots +n_i$.  Les trois graphes $G_{2n_1, \ldots, 2n_k}$, $G'_{2n_1, \ldots, 2n_k}$ et $H_{1, 2n_1, \ldots, 2n_k}$ sont alors définis comme suit \cite{graphes (-1)-critiques}.
 
$\bullet$ Le graphe $G_{2n_1, \ldots, 2n_k}$ est d\'efini sur $\{0, \ldots, 2s_k-1\}$ par  
 $$E(G_{2n_1, \ldots, 2n_k}) = \bigcup\limits_{i=0}^{k-1}E(t_{2s_{i}}(G_{2n_{i+1}})) \cup \{\{2p-1,2q-1\}: 1 \leq p \leq n_1 \ \textnormal{et} \ n_1 +1 \leq q \leq s_k\}.$$ 

$\bullet$ Le graphe $G'_{2n_1, \ldots, 2n_k}$ est obtenu à partir du graphe $G_{2n_1, \ldots, 2n_k}$ en remplaçant son sous-graphe (induit) $G_{2n_1}$ par $G'_{2n_1}$. Plus précisément, $G'_{2n_1, \ldots, 2n_k}$ est le graphe défini sur $\{0, \ldots, 2s_k-1\}$ par $$E(G'_{2n_1, \ldots, 2n_k}) = E(G_{2n_1, \ldots, 2n_k}) \cup \{\{2p-1, 2q-1\}: 1\leq p < q \leq n_1\}.$$ 

$\bullet$ Le graphe scindé \footnote{Les sommets pairs forment une clique de $H_{1, 2n_1, \ldots, 2n_k}$ et les sommets impairs en forment un stable.} $H_{1, 2n_1, \ldots, 2n_k}$ défini sur $\{0, \ldots, 2s_k\}$ par 
$$E(H_{1, 2n_1, \ldots, 2n_k}) = \bigcup\limits_{i=0}^{k-1}E(t_{2s_{i}+1}(G'_{2n_{i+1}})) \cup \{\{2p,2q\}: 0 \leq p < q \leq s_k\} .$$

Enfin, introduisons les familles suivantes: 
 $$\mathcal{G} = \{G_{2n_1, \ldots, 2n_k}: \ k \geqslant 3 \ \textnormal{et pour tout} \ i \in \{1, \ldots, k\},  \ n_i \geqslant 1\}, $$
$$\mathcal{G'} = \{G'_{2n_1, \ldots, 2n_k}: \ k \geqslant 2, \ n_1 \geqslant 2, \ \textnormal{et pour tout} \ i \in \{2, \ldots, k\}, \  n_i \geqslant 1\}, {\rm \ et}$$ 
$$\mathcal{H} = \{H_{1, 2n_1, \ldots, 2n_k}: k \geqslant 2 \ \textnormal{et pour tout} \ i \in \{1, \ldots, k\}, \  n_i \geqslant 1\}.$$ 

 \begin{theoreme} \label{graphes (-1)-critiques}\cite{graphes (-1)-critiques}
 \`A isomorphisme près, les graphes (-1)-critiques sont les graphes de $\mathcal{G} \cup \mathcal{G'} \cup \mathcal{H}$ et leurs complémentaires, le sommet $0$ étant l'unique sommet non critique de chacun de ces graphes. 
\end{theoreme}
\section{Caractérisation des posets (-1)-critiques}

Dans cette section, nous donnons une caractérisation morphologique des posets (-1)-critiques. Nous introduisons alors pour tout entier $n \geqslant 2$, le poset critique  $Q_{2n}$ \cite{S.T}  d\'efini sur $\{0, \ldots, 2n-1\}$ par $A(Q_{2n}) = \{(2p,2q+1) : 0 \leqslant p \leqslant q \leq n-1 \}$. Le poset  $Q'_{2n}$  est défini sur $\{0, \ldots, 2n-1\}$ par $A(Q'_{2n})= A(Q_{2n})\cup \{ (2p-1,2q-1): 1\leqslant p< q\leqslant n \}$ (voir Figure \ref{posets Q et Q'}).

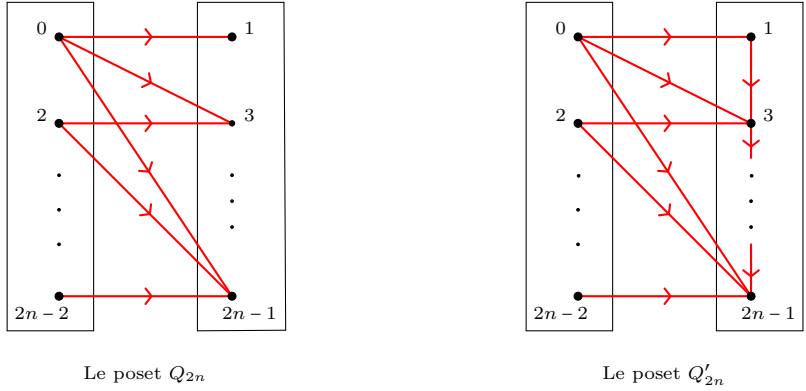
\begin{figure}[h]
\begin{center}
\definecolor{ffqqqq}{rgb}{1,0,0}
\begin{tikzpicture}[line cap=round,line join=round,>=triangle 45,x=1.15cm,y=1.15cm]
\draw [line width=0.4pt] (-4.6,3.4)-- (-3.6,3.4);
\draw [line width=0.4pt] (-3.6,3.4)-- (-3.6,-0.4);
\draw [line width=0.4pt] (-3.6,-0.4)-- (-4.6,-0.4);
\draw [line width=0.4pt] (-4.6,-0.4)-- (-4.6,3.4);
\draw [line width=0.4pt] (-2.4,3.4)-- (-1.4,3.4);
\draw [line width=0.4pt] (-1.4,3.4)-- (-1.38,-0.39333333333333304);
\draw [line width=0.4pt] (-1.38,-0.39333333333333304)-- (-2.4,-0.4);
\draw [line width=0.4pt] (-2.4,-0.4)-- (-2.4,3.4);
\draw [line width=0.4pt] (1.4,3.4)-- (2.4,3.4);
\draw [line width=0.4pt] (2.4,3.4)-- (2.4,-0.4);
\draw [line width=0.4pt] (2.4,-0.4)-- (1.4,-0.4);
\draw [line width=0.4pt] (1.4,-0.4)-- (1.4,3.4);
\draw [line width=0.4pt] (3.6,3.4)-- (4.6,3.4);
\draw [line width=0.4pt] (4.6,3.4)-- (4.6,-0.4);
\draw [line width=0.4pt] (4.6,-0.4)-- (3.6,-0.4);
\draw [line width=0.4pt] (3.6,-0.4)-- (3.6,3.4);
\draw [line width=0.8pt,color=ffqqqq] (2,3)-- (4,3);
\draw [line width=0.8pt,color=ffqqqq] (3.07,3) -- (3,2.91);
\draw [line width=0.8pt,color=ffqqqq] (3.07,3) -- (3,3.09);
\draw [line width=0.8pt,color=ffqqqq] (2,3)-- (4,2);
\draw [line width=0.8pt,color=ffqqqq] (3.0626099033699945,2.468695048315003) -- (2.959750776405004,2.4195015528100074);
\draw [line width=0.8pt,color=ffqqqq] (3.0626099033699945,2.468695048315003) -- (3.0402492235949974,2.5804984471899926);
\draw [line width=0.8pt,color=ffqqqq] (2,2)-- (4,2);
\draw [line width=0.8pt,color=ffqqqq] (3.07,2) -- (3,1.91);
\draw [line width=0.8pt,color=ffqqqq] (3.07,2) -- (3,2.09);
\draw [line width=0.8pt,color=ffqqqq] (2,2)-- (4,0);
\draw [line width=0.8pt,color=ffqqqq] (3.0494974746830583,0.9505025253169419) -- (2.936360389693211,0.9363603896932108);
\draw [line width=0.8pt,color=ffqqqq] (3.0494974746830583,0.9505025253169419) -- (3.0636396103067884,1.0636396103067896);
\draw [line width=0.8pt,color=ffqqqq] (2,0)-- (4,0);
\draw [line width=0.8pt,color=ffqqqq] (3.07,0) -- (3,-0.09);
\draw [line width=0.8pt,color=ffqqqq] (3.07,0) -- (3,0.09);
\draw [line width=0.8pt,color=ffqqqq] (2,3)-- (4,0);
\draw [line width=0.8pt,color=ffqqqq] (3.038829013735766,1.4417564793963507) -- (2.925115473509594,1.4500769823397293);
\draw [line width=0.8pt,color=ffqqqq] (3.038829013735766,1.4417564793963507) -- (3.0748845264904054,1.5499230176602705);
\draw [line width=0.8pt,color=ffqqqq] (-4,3)-- (-2,3);
\draw [line width=0.8pt,color=ffqqqq] (-2.93,3) -- (-3,2.91);
\draw [line width=0.8pt,color=ffqqqq] (-2.93,3) -- (-3,3.09);
\draw [line width=0.8pt,color=ffqqqq] (-4,3)-- (-2,2);
\draw [line width=0.8pt,color=ffqqqq] (-2.9373900966300055,2.468695048315003) -- (-3.0402492235949956,2.4195015528100074);
\draw [line width=0.8pt,color=ffqqqq] (-2.9373900966300055,2.468695048315003) -- (-2.9597507764050035,2.5804984471899926);
\draw [line width=0.8pt,color=ffqqqq] (-4,3)-- (-2,0);
\draw [line width=0.8pt,color=ffqqqq] (-2.961170986264234,1.4417564793963507) -- (-3.074884526490406,1.4500769823397293);
\draw [line width=0.8pt,color=ffqqqq] (-2.961170986264234,1.4417564793963507) -- (-2.9251154735095937,1.5499230176602705);
\draw [line width=0.8pt,color=ffqqqq] (-4,2)-- (-2,2);
\draw [line width=0.8pt,color=ffqqqq] (-2.93,2) -- (-3,1.91);
\draw [line width=0.8pt,color=ffqqqq] (-2.93,2) -- (-3,2.09);
\draw [line width=0.8pt,color=ffqqqq] (-4,2)-- (-2,0);
\draw [line width=0.8pt,color=ffqqqq] (-2.9505025253169417,0.9505025253169419) -- (-3.0636396103067893,0.9363603896932108);
\draw [line width=0.8pt,color=ffqqqq] (-2.9505025253169417,0.9505025253169419) -- (-2.9363603896932102,1.0636396103067896);
\draw [line width=0.8pt,color=ffqqqq] (-4,0)-- (-2,0);
\draw [line width=0.8pt,color=ffqqqq] (-2.93,0) -- (-3,-0.09);
\draw [line width=0.8pt,color=ffqqqq] (-2.93,0) -- (-3,0.09);
\draw [line width=0.8pt,color=ffqqqq] (4,3)-- (4,2);
\draw [line width=0.8pt,color=ffqqqq] (4,2.43) -- (3.91,2.5);
\draw [line width=0.8pt,color=ffqqqq] (4,2.43) -- (4.09,2.5);
\draw [line width=0.8pt,color=ffqqqq] (4,0.6)-- (4,0);
\draw [line width=0.8pt,color=ffqqqq] (4,0.23) -- (3.91,0.3);
\draw [line width=0.8pt,color=ffqqqq] (4,0.23) -- (4.09,0.3);
\draw [line width=0.8pt,color=ffqqqq] (4,2)-- (4,1.6);
\draw [line width=0.8pt,color=ffqqqq] (4,1.73) -- (3.91,1.8);
\draw [line width=0.8pt,color=ffqqqq] (4,1.73) -- (4.09,1.8);
\begin{scriptsize}

\draw [fill=black] (-4,1.4) circle (0.5pt);
\draw [fill=black] (-4,1) circle (0.5pt);
\draw [fill=black] (-4,0.6) circle (0.5pt);
\draw [fill=black] (-2,1.1) circle (0.5pt);
\draw [fill=black] (-2,0.8) circle (0.5pt);
\draw [fill=black] (-2,1.4) circle (0.5pt);
\draw [fill=black] (2,3) circle (1.5pt);
\draw [fill=black] (4,3) circle (1.5pt);
\draw [fill=black] (2,2) circle (1.5pt);
\draw [fill=black] (4,2) circle (1.5pt);
\draw [fill=black] (2,0) circle (1.5pt);
\draw [fill=black] (4,0) circle (1.5pt);
\draw [fill=black] (2.0066666666666673,1.3933333333333324) circle (0.5pt);
\draw [fill=black] (4,0.8) circle (0.5pt);
\draw [fill=black] (4,2) circle (0.5pt);
\draw [fill=black] (4,1.1) circle (0.5pt);
\draw [fill=black] (2,1) circle (0.5pt);
\draw [fill=black] (2,0.6) circle (0.5pt);
\draw [fill=black] (4.006666666666668,1.3933333333333324) circle (0.5pt);

\draw [fill=black] (-4.,3.) circle (1.5pt);
\draw[color=black] (-4.2,3.1) node {$0$};
\draw [fill=black] (-2.,3.) circle (1.5pt);
\draw[color=black] (-1.8,3.1) node {$1$};
\draw [fill=black] (-4.,2.) circle (1.5pt);
\draw[color=black] (-4.2,2.1) node {$2$};
\draw [fill=black] (-2.,2.) circle (1.0pt);
\draw[color=black] (-1.8,2.1) node {$3$};
\draw [fill=black] (-4.,0.) circle (1.5pt);
\draw[color=black] (-4.2,-0.2) node {$2n-2$};
\draw [fill=black] (-2.,0.) circle (1.5pt);
\draw[color=black] (-1.8,-0.2) node {$2n-1$};

\draw[color=black] (1.8,3.1) node {$0$};
\draw[color=black] (4.2,3.1) node {$1$};
\draw[color=black] (1.8,2.1) node {$2$};
\draw[color=black] (4.2,2.1) node {$3$};
\draw[color=black] (1.8,-0.2) node {$2n-2$};
\draw[color=black] (4.2,-0.2) node {$2n-1$};
\draw[color=black] (3,-0.9) node {Le poset $ Q'_{2n}$};
\draw[color=black] (-3,-0.9) node {Le poset $Q_{2n}$};
\end{scriptsize}

\end{tikzpicture}
\caption{\textbf{Les posets   $Q_{2n}$ et $Q'_{2n}$}.}  
\label{posets Q et Q'}
\end{center} 
\end{figure}

Maintenant, considérons   $k$  entiers strictement positifs  $n_1, \ldots, n_k$ ($k\geqslant 2$), et posons pour tout $i \in \{1, \ldots, k\}$, $s_i = n_1 + \cdots +n_i$.  Les trois digraphes $Q_{2n_1, \ldots, 2n_k}$, $Q'_{2n_1, \ldots, 2n_k}$ et $R_{1, 2n_1, 2n_2}$, définis ci-après (voir aussi Figures \ref{poset (-1)-crit Q}, \ref{poset (-1)-crit Q'} et \ref{poset (-1)-crit H}), interviennent dans la caractérisation des posets (-1)-critiques.

 $\bullet$ Le digraphe $Q_{2n_1, \ldots, 2n_k}$ est d\'efini sur $\{0, \ldots, 2s_k-1\}$ par $A(Q_{2n_1, \ldots, 2n_k}) = $ 
 $$\bigcup\limits_{i=1}^{k-1}A(t_{2s_{i}}((Q_{2n_{i+1}})^{\star}))\cup A(Q_{2n_1}) \cup \{(2q-1,2p-1): 1 \leqslant p \leqslant n_1,   n_1 +1 \leqslant q \leqslant s_k\}.$$ 

$\bullet$ Le digraphe $Q'_{2n_1, \ldots, 2n_k}$ est  défini sur $\{0, \ldots, 2s_k-1\}$ par $$A(Q'_{2n_1, \ldots, 2n_k}) = A(Q_{2n_1, \ldots, 2n_k}) \cup \{(2p-1, 2q-1): 1\leqslant p < q \leqslant n_1\}.$$ 

$\bullet$ Le digraphe $R_{1, 2n_1,  2n_2}$ défini sur $\{0, \ldots, 2s_2\}$ par $A(R_{1, 2n_1, 2n_2}) =$
$$  A(t_1(Q'_{2n_1}))\cup A(t_{2n_1+1}((Q'_{2n_2})^{\star})  )\cup  \{(0,2p): 0 \leqslant p \leqslant n_1\} \cup  \{(2q,2p): 0 \leqslant p \leqslant n_1, n_1+1\leqslant q \leqslant s_2\}.$$

Nous vérifions facilement, les résultats suivants.
\begin{Fait} \label{le post -1critique Q} Pour tous entiers strictement positifs $n_1, \ldots, n_k$ tels que  $k\geqslant 3$, le digraphe $Q_{2n_1, \ldots, 2n_k}$ est une orientation transitive du graphe (-1)-critique $G_{2n_1, \ldots, 2n_k}$.
\end{Fait}
\begin{Fait}\label{le post -1critique Q'} Pour tous entiers strictement positifs $n_1, \ldots, n_k$ tels que  $k\geqslant 2$ et $n_1\geqslant 2$, le digraphe $Q'_{2n_1, \ldots, 2n_k}$ est une orientation transitive du graphe (-1)-critique $G'_{2n_1, \ldots, 2n_k}$.
\end{Fait}
 
\begin{figure}[h]
\begin{center}
\definecolor{sqsqsq}{rgb}{0.12549019607843137,0.12549019607843137,0.12549019607843137}
\definecolor{ffqqqq}{rgb}{1.,0.,0.}
\definecolor{qqqqff}{rgb}{0.,0.,1.}
\begin{tikzpicture}[line cap=round,line join=round,>=triangle 45,x=1.5cm,y=1.5cm]
\draw [line width=0.4pt] (-4.6,3.4)-- (-3.6,3.4);
\draw [line width=0.4pt] (-3.6,3.4)-- (-3.6,-0.4);
\draw [line width=0.4pt] (-3.6,-0.4)-- (-4.6,-0.4);
\draw [line width=0.4pt] (-4.6,-0.4)-- (-4.6,3.4);
\draw [line width=0.8pt,color=qqqqff] (-2.4,3.4)-- (-1.4,3.4);
\draw [line width=0.8pt,color=qqqqff] (-1.4,3.4)-- (-1.38,-0.39333333333333304);
\draw [line width=0.8pt,color=qqqqff] (-1.38,-0.39333333333333304)-- (-2.4,-0.4);
\draw [line width=0.8pt,color=qqqqff] (-2.4,-0.4)-- (-2.4,3.4);
\draw [line width=0.8pt,color=qqqqff] (0.4,3.4)-- (1.4,3.4);
\draw [line width=0.8pt,color=qqqqff] (1.4,3.4)-- (1.4,-0.4);
\draw [line width=0.8pt,color=qqqqff] (1.4,-0.4)-- (0.4,-0.4);
\draw [line width=0.8pt,color=qqqqff] (0.4,-0.4)-- (0.4,3.4);
\draw [line width=0.4pt] (2.6,3.4)-- (3.6,3.4);
\draw [line width=0.4pt] (3.6,3.4)-- (3.6,-0.4);
\draw [line width=0.4pt] (3.6,-0.4)-- (2.6,-0.4);
\draw [line width=0.4pt] (2.6,-0.4)-- (2.6,3.4);
\draw [line width=0.8pt,color=ffqqqq] (1.,3.)-- (3.,3.);
\draw [line width=0.8pt,color=ffqqqq] (2.07,3.) -- (2.,2.91);
\draw [line width=0.8pt,color=ffqqqq] (2.07,3.) -- (2.,3.09);
\draw [line width=0.8pt,color=ffqqqq] (1.,3.)-- (3.,1.);
\draw [line width=0.8pt,color=ffqqqq] (2.049497474683058,1.9505025253169417) -- (1.9363603896932107,1.9363603896932107);
\draw [line width=0.8pt,color=ffqqqq] (2.049497474683058,1.9505025253169417) -- (2.063639610306788,2.0636396103067893);
\draw [line width=0.8pt,color=ffqqqq] (1.,1.)-- (3.,1.);
\draw [line width=0.8pt,color=ffqqqq] (2.07,1.) -- (2.,0.91);
\draw [line width=0.8pt,color=ffqqqq] (2.07,1.) -- (2.,1.09);
\draw [line width=0.8pt,color=ffqqqq] (1.,1.)-- (3.,0.);
\draw [line width=0.8pt,color=ffqqqq] (2.062609903369994,0.4686950483150028) -- (1.9597507764050035,0.4195015528100071);
\draw [line width=0.8pt,color=ffqqqq] (2.062609903369994,0.4686950483150028) -- (2.040249223594997,0.5804984471899923);
\draw [line width=0.8pt,color=ffqqqq] (1.,0.)-- (3.,0.);
\draw [line width=0.8pt,color=ffqqqq] (2.07,0.) -- (2.,-0.09);
\draw [line width=0.8pt,color=ffqqqq] (2.07,0.) -- (2.,0.09);
\draw [line width=0.8pt,color=ffqqqq] (1.,3.)-- (3.,0.);
\draw [line width=0.8pt,color=ffqqqq] (2.0388290137357656,1.4417564793963507) -- (1.9251154735095939,1.4500769823397293);
\draw [line width=0.8pt,color=ffqqqq] (2.0388290137357656,1.4417564793963507) -- (2.074884526490405,1.5499230176602705);
\draw [line width=0.8pt,color=ffqqqq] (-4.,3.)-- (-2.,3.);
\draw [line width=0.8pt,color=ffqqqq] (-2.93,3.) -- (-3.,2.91);
\draw [line width=0.8pt,color=ffqqqq] (-2.93,3.) -- (-3.,3.09);
\draw [line width=0.8pt,color=ffqqqq] (-4.,3.)-- (-2.,2.);
\draw [line width=0.8pt,color=ffqqqq] (-2.9373900966300064,2.468695048315003) -- (-3.040249223594997,2.419501552810008);
\draw [line width=0.8pt,color=ffqqqq] (-2.9373900966300064,2.468695048315003) -- (-2.9597507764050044,2.5804984471899926);
\draw [line width=0.8pt,color=ffqqqq] (-4.,3.)-- (-2.,0.);
\draw [line width=0.8pt,color=ffqqqq] (-2.961170986264234,1.4417564793963507) -- (-3.074884526490406,1.4500769823397293);
\draw [line width=0.8pt,color=ffqqqq] (-2.961170986264234,1.4417564793963507) -- (-2.9251154735095937,1.5499230176602705);
\draw [line width=0.8pt,color=ffqqqq] (-4.,2.)-- (-2.,2.);
\draw [line width=0.8pt,color=ffqqqq] (-2.93,2.) -- (-3.,1.91);
\draw [line width=0.8pt,color=ffqqqq] (-2.93,2.) -- (-3.,2.09);
\draw [line width=0.8pt,color=ffqqqq] (-4.,2.)-- (-2.,0.);
\draw [line width=0.8pt,color=ffqqqq] (-2.9505025253169417,0.9505025253169419) -- (-3.0636396103067898,0.9363603896932108);
\draw [line width=0.8pt,color=ffqqqq] (-2.9505025253169417,0.9505025253169419) -- (-2.9363603896932107,1.0636396103067896);
\draw [line width=0.8pt,color=ffqqqq] (-4.,0.)-- (-2.,0.);
\draw [line width=0.8pt,color=ffqqqq] (-2.93,0.) -- (-3.,-0.09);
\draw [line width=0.8pt,color=ffqqqq] (-2.93,0.) -- (-3.,0.09);
\draw [line width=1.2pt,color=qqqqff] (0.4,1.0333333333333325)-- (-1.387451833091392,1.0200310096673564);
\draw [line width=1.2pt,color=qqqqff] (-0.5737237012674723,1.026086823192083) -- (-0.49447010193102264,1.1266794024025664);
\draw [line width=1.2pt,color=qqqqff] (-0.5737237012674723,1.026086823192083) -- (-0.49298173116036803,0.9266849405981227);
\draw [line width=0.8pt,color=qqqqff] (0.4,-2.)-- (1.4,-2.);
\draw [line width=0.8pt,color=qqqqff] (1.4,-2.)-- (1.4,-5.8);
\draw [line width=0.8pt,color=qqqqff] (1.4,-5.8)-- (0.4,-5.8);
\draw [line width=0.8pt,color=qqqqff] (0.4,-5.8)-- (0.4,-2.);
\draw [line width=1.2pt,color=qqqqff] (0.7933333333333333,-2.)-- (-1.3874518330913919,1.0200310096673564);
\draw [line width=1.2pt,color=qqqqff] (-0.34389357758033995,-0.42512660629560567) -- (-0.21598688879063352,-0.4314415855396833);
\draw [line width=1.2pt,color=qqqqff] (-0.34389357758033995,-0.42512660629560567) -- (-0.37813161096742537,-0.5485274047929606);
\draw [line width=0.4pt] (2.6,-2.)-- (3.6,-2.);
\draw [line width=0.4pt] (3.6,-2.)-- (3.6,-5.8);
\draw [line width=0.4pt] (3.6,-5.8)-- (2.6,-5.8);
\draw [line width=0.4pt] (2.6,-5.8)-- (2.6,-2.);
\draw [line width=0.8pt,color=ffqqqq] (1.,-2.4)-- (3.006666666666667,-2.4066666666666654);
\draw [line width=0.8pt,color=ffqqqq] (2.0733329470273296,-2.403565890189458) -- (2.0030343316611714,-2.493332836654184);
\draw [line width=0.8pt,color=ffqqqq] (2.0733329470273296,-2.403565890189458) -- (2.0036323350054954,-2.3133338300124797);
\draw [line width=0.8pt,color=ffqqqq] (1.,-2.4)-- (3.,-4.4);
\draw [line width=0.8pt,color=ffqqqq] (2.049497474683058,-3.4494974746830587) -- (1.9363603896932107,-3.463639610306789);
\draw [line width=0.8pt,color=ffqqqq] (2.049497474683058,-3.4494974746830587) -- (2.063639610306788,-3.3363603896932115);
\draw [line width=0.8pt,color=ffqqqq] (1.,-2.4)-- (3.,-5.4);
\draw [line width=0.8pt,color=ffqqqq] (2.0388290137357656,-3.95824352060365) -- (1.9251154735095939,-3.949923017660272);
\draw [line width=0.8pt,color=ffqqqq] (2.0388290137357656,-3.95824352060365) -- (2.074884526490405,-3.850076982339731);
\draw [line width=0.8pt,color=ffqqqq] (1.,-4.4)-- (3.,-4.4);
\draw [line width=0.8pt,color=ffqqqq] (2.07,-4.4) -- (2.,-4.49);
\draw [line width=0.8pt,color=ffqqqq] (2.07,-4.4) -- (2.,-4.31);
\draw [line width=0.8pt,color=ffqqqq] (1.,-4.4)-- (3.,-5.4);
\draw [line width=0.8pt,color=ffqqqq] (2.062609903369994,-4.931304951684997) -- (1.9597507764050035,-4.980498447189992);
\draw [line width=0.8pt,color=ffqqqq] (2.062609903369994,-4.931304951684997) -- (2.040249223594997,-4.819501552810008);
\draw [line width=0.8pt,color=ffqqqq] (1.,-5.4)-- (3.,-5.4);
\draw [line width=0.8pt,color=ffqqqq] (2.07,-5.4) -- (2.,-5.49);
\draw [line width=0.8pt,color=ffqqqq] (2.07,-5.4) -- (2.,-5.31);
\begin{scriptsize}
\draw [fill=black] (-4.,3.) circle (2.5pt);
\draw[color=black] (-4.2,3.1) node {$0$};
\draw [fill=black] (-2.,3.) circle (1.5pt);
\draw[color=black] (-1.8,3.1) node {$1$};
\draw [fill=black] (-4.,2.) circle (1.5pt);
\draw[color=black] (-4.2,2.1) node {$2$};
\draw [fill=black] (-2.,2.) circle (1.0pt);
\draw[color=black] (-1.8,2.1) node {$3$};
\draw [fill=black] (-4.,0.) circle (1.5pt);
\draw[color=black] (-4.2,-0.2) node {$2n_1-2$};
\draw [fill=black] (-2.,0.) circle (1.5pt);
\draw[color=black] (-1.8,-0.2) node {$2n_1-1$};

\draw [fill=black] (-0.4,0.1) circle (0.5pt);
\draw [fill=black] (-0.4,0.3) circle (0.5pt);
\draw [fill=black] (-0.4,0.5) circle (0.5pt);

\draw [fill=black] (-4.,1.4) circle (0.5pt);
\draw [fill=black] (-4.,1.) circle (0.5pt);
\draw [fill=black] (-4.,0.6) circle (0.5pt);
\draw [fill=black] (-2.,1.1) circle (0.5pt);
\draw [fill=black] (-2.,0.8) circle (0.5pt);
\draw [fill=black] (-2.,1.4) circle (0.5pt);
\draw [fill=black] (1.,3.) circle (1.5pt);
\draw[color=black] (0.9,3.15) node {$2s_2-1$};
\draw [fill=black] (3.,3.) circle (1.5pt);
\draw[color=black] (3.15,3.2) node {$2s_2-2$};
\draw [fill=black] (1.,1.) circle (1.5pt);
\draw[color=black] (0.9,1.2) node {$2n_1+3$};
\draw [fill=black] (3.,1.) circle (1.5pt);
\draw[color=black] (3.15,1.2) node {$2n_1+2$};
\draw [fill=black] (1.,0.) circle (1.5pt);
\draw[color=black] (0.9,-0.2) node {$2n_1+1$};
\draw [fill=black] (3.,0.) circle (1.5pt);
\draw[color=black] (3.15,-0.2) node {$2n_1$};
\draw [fill=black] (1.02,1.58) circle (0.5pt);
\draw [fill=black] (1.,2.) circle (0.5pt);
\draw [fill=black] (1.,2.4) circle (0.5pt);
\draw [fill=black] (3.,1.6) circle (0.5pt);
\draw [fill=black] (3.,2.) circle (0.5pt);
\draw [fill=black] (3.,2.4) circle (0.5pt);
\draw [fill=black] (1.,-2.4) circle (1.5pt);
\draw[color=black] (0.9,-2.2) node {$2s_k-1$};
\draw [fill=black] (1.,-4.4) circle (1.5pt);
\draw[color=black] (0.9,-4.2) node {$2s_{k-1}+3$};
\draw [fill=black] (1.,-5.4) circle (1.5pt);
\draw[color=black] (0.9,-5.6) node {$2s_{k-1}+1$};
\draw [fill=sqsqsq] (1.,-0.8) circle (0.5pt);
\draw [fill=black] (1.,-1.2) circle (0.5pt);
\draw [fill=black] (1.,-1.6) circle (0.5pt);
\draw [fill=black] (1.,-3.) circle (0.5pt);
\draw [fill=black] (1.0066666666666668,-3.4066666666666645) circle (0.5pt);
\draw [fill=black] (1.,-3.8) circle (0.5pt);
\draw [fill=black] (3.006666666666667,-2.4066666666666654) circle (1.5pt);
\draw[color=black] (3.15,-2.18) node {$2s_k-2$};
\draw [fill=black] (3.,-4.4) circle (1.5pt);
\draw[color=black] (3.15,-4.2) node {$2s_{k-1}+2$};
\draw [fill=black] (3.,-5.4) circle (1.5pt);
\draw[color=black] (3.15,-5.6) node {$2s_{k-1}$};
\draw [fill=black] (3.,-3.8) circle (0.5pt);
\draw [fill=black] (3.,-3.4) circle (0.5pt);
\draw [fill=black] (3.,-3.) circle (0.5pt);
\draw (0.4,4.2) node[anchor=north west] {\small{Le poset critique $t_{2n_1}((Q_{2n_2})^{\star})$}};
\draw (-4.2,4.2) node[anchor=north west] {\small{Le poset critique  $Q_{2n_1}$}};

\draw (1.1,-1.4) node[anchor=north west] {\small{Le poset critique $t_{2s_{k-1}}((Q_{2n_k})^{\star})$}};
\end{scriptsize}
\end{tikzpicture}
\caption{\textbf{Le poset (-1)-critique $Q_{2n_1,\ldots, 2n_k}$} (0 est le sommet non critique).}  
\label{poset (-1)-crit Q}
\end{center} 
\end{figure}

\begin{figure}[h]
\begin{center}
\definecolor{sqsqsq}{rgb}{0.12549019607843137,0.12549019607843137,0.12549019607843137}
\definecolor{ffqqqq}{rgb}{1.,0.,0.}
\definecolor{qqqqff}{rgb}{0.,0.,1.}
\begin{tikzpicture}[line cap=round,line join=round,>=triangle 45,x=1.5cm,y=1.5cm]
\draw [line width=0.4pt] (-4.6,3.4)-- (-3.6,3.4);
\draw [line width=0.4pt] (-3.6,3.4)-- (-3.6,-0.4);
\draw [line width=0.4pt] (-3.6,-0.4)-- (-4.6,-0.4);
\draw [line width=0.4pt] (-4.6,-0.4)-- (-4.6,3.4);
\draw [line width=0.8pt,color=qqqqff] (-2.4,3.4)-- (-1.4,3.4);
\draw [line width=0.8pt,color=qqqqff] (-1.4,3.4)-- (-1.38,-0.39333333333333304);
\draw [line width=0.8pt,color=qqqqff] (-1.38,-0.39333333333333304)-- (-2.4,-0.4);
\draw [line width=0.8pt,color=qqqqff] (-2.4,-0.4)-- (-2.4,3.4);
\draw [line width=0.8pt,color=qqqqff] (0.4,3.4)-- (1.4,3.4);
\draw [line width=0.8pt,color=qqqqff] (1.4,3.4)-- (1.4,-0.4);
\draw [line width=0.8pt,color=qqqqff] (1.4,-0.4)-- (0.4,-0.4);
\draw [line width=0.8pt,color=qqqqff] (0.4,-0.4)-- (0.4,3.4);
\draw [line width=0.4pt] (2.6,3.4)-- (3.6,3.4);
\draw [line width=0.4pt] (3.6,3.4)-- (3.6,-0.4);
\draw [line width=0.4pt] (3.6,-0.4)-- (2.6,-0.4);
\draw [line width=0.4pt] (2.6,-0.4)-- (2.6,3.4);
\draw [line width=0.8pt,color=ffqqqq] (1.,3.)-- (3.,3.);
\draw [line width=0.8pt,color=ffqqqq] (2.07,3.) -- (2.,2.91);
\draw [line width=0.8pt,color=ffqqqq] (2.07,3.) -- (2.,3.09);
\draw [line width=0.8pt,color=ffqqqq] (1.,3.)-- (3.,1.);
\draw [line width=0.8pt,color=ffqqqq] (2.049497474683058,1.9505025253169417) -- (1.9363603896932107,1.9363603896932107);
\draw [line width=0.8pt,color=ffqqqq] (2.049497474683058,1.9505025253169417) -- (2.063639610306788,2.0636396103067893);
\draw [line width=0.8pt,color=ffqqqq] (1.,1.)-- (3.,1.);
\draw [line width=0.8pt,color=ffqqqq] (2.07,1.) -- (2.,0.91);
\draw [line width=0.8pt,color=ffqqqq] (2.07,1.) -- (2.,1.09);
\draw [line width=0.8pt,color=ffqqqq] (1.,1.)-- (3.,0.);
\draw [line width=0.8pt,color=ffqqqq] (2.062609903369994,0.4686950483150028) -- (1.9597507764050035,0.4195015528100071);
\draw [line width=0.8pt,color=ffqqqq] (2.062609903369994,0.4686950483150028) -- (2.040249223594997,0.5804984471899923);
\draw [line width=0.8pt,color=ffqqqq] (1.,0.)-- (3.,0.);
\draw [line width=0.8pt,color=ffqqqq] (2.07,0.) -- (2.,-0.09);
\draw [line width=0.8pt,color=ffqqqq] (2.07,0.) -- (2.,0.09);
\draw [line width=0.8pt,color=ffqqqq] (1.,3.)-- (3.,0.);
\draw [line width=0.8pt,color=ffqqqq] (2.0388290137357656,1.4417564793963507) -- (1.9251154735095939,1.4500769823397293);
\draw [line width=0.8pt,color=ffqqqq] (2.0388290137357656,1.4417564793963507) -- (2.074884526490405,1.5499230176602705);
\draw [line width=0.8pt,color=ffqqqq] (-4.,3.)-- (-2.,3.);
\draw [line width=0.8pt,color=ffqqqq] (-2.93,3.) -- (-3.,2.91);
\draw [line width=0.8pt,color=ffqqqq] (-2.93,3.) -- (-3.,3.09);
\draw [line width=0.8pt,color=ffqqqq] (-4.,3.)-- (-2.,2.);
\draw [line width=0.8pt,color=ffqqqq] (-2.9373900966300064,2.468695048315003) -- (-3.040249223594997,2.419501552810008);
\draw [line width=0.8pt,color=ffqqqq] (-2.9373900966300064,2.468695048315003) -- (-2.9597507764050044,2.5804984471899926);
\draw [line width=0.8pt,color=ffqqqq] (-4.,3.)-- (-2.,0.);
\draw [line width=0.8pt,color=ffqqqq] (-2.961170986264234,1.4417564793963507) -- (-3.074884526490406,1.4500769823397293);
\draw [line width=0.8pt,color=ffqqqq] (-2.961170986264234,1.4417564793963507) -- (-2.9251154735095937,1.5499230176602705);
\draw [line width=0.8pt,color=ffqqqq] (-4.,2.)-- (-2.,2.);
\draw [line width=0.8pt,color=ffqqqq] (-2.93,2.) -- (-3.,1.91);
\draw [line width=0.8pt,color=ffqqqq] (-2.93,2.) -- (-3.,2.09);
\draw [line width=0.8pt,color=ffqqqq] (-4.,2.)-- (-2.,0.);
\draw [line width=0.8pt,color=ffqqqq] (-2.9505025253169417,0.9505025253169419) -- (-3.0636396103067898,0.9363603896932108);
\draw [line width=0.8pt,color=ffqqqq] (-2.9505025253169417,0.9505025253169419) -- (-2.9363603896932107,1.0636396103067896);
\draw [line width=0.8pt,color=ffqqqq] (-4.,0.)-- (-2.,0.);
\draw [line width=0.8pt,color=ffqqqq] (-2.93,0.) -- (-3.,-0.09);
\draw [line width=0.8pt,color=ffqqqq] (-2.93,0.) -- (-3.,0.09);
\draw [line width=1.2pt,color=qqqqff] (0.4,1.0333333333333328)-- (-1.3874518330913919,1.0200310096673566);
\draw [line width=1.2pt,color=qqqqff] (-0.5737237012674723,1.026086823192083) -- (-0.49447010193102264,1.1266794024025664);
\draw [line width=1.2pt,color=qqqqff] (-0.5737237012674723,1.026086823192083) -- (-0.49298173116036803,0.9266849405981227);
\draw [line width=0.8pt,color=qqqqff] (0.4,-2.)-- (1.4,-2.);
\draw [line width=0.8pt,color=qqqqff] (1.4,-2.)-- (1.4,-5.8);
\draw [line width=0.8pt,color=qqqqff] (1.4,-5.8)-- (0.4,-5.8);
\draw [line width=0.8pt,color=qqqqff] (0.4,-5.8)-- (0.4,-2.);
\draw [line width=1.2pt,color=qqqqff] (0.7933333333333333,-2.)-- (-1.387451833091392,1.0200310096673566);
\draw [line width=1.2pt,color=qqqqff] (-0.34389357758033995,-0.42512660629560567) -- (-0.21598688879063352,-0.4314415855396833);
\draw [line width=1.2pt,color=qqqqff] (-0.34389357758033995,-0.42512660629560567) -- (-0.37813161096742537,-0.5485274047929606);
\draw [line width=0.4pt] (2.6,-2.)-- (3.6,-2.);
\draw [line width=0.4pt] (3.6,-2.)-- (3.6,-5.8);
\draw [line width=0.4pt] (3.6,-5.8)-- (2.6,-5.8);
\draw [line width=0.4pt] (2.6,-5.8)-- (2.6,-2.);
\draw [line width=0.8pt,color=ffqqqq] (1.,-2.4)-- (3.006666666666667,-2.4066666666666654);
\draw [line width=0.8pt,color=ffqqqq] (2.0733329470273296,-2.403565890189458) -- (2.0030343316611714,-2.493332836654184);
\draw [line width=0.8pt,color=ffqqqq] (2.0733329470273296,-2.403565890189458) -- (2.0036323350054954,-2.3133338300124797);
\draw [line width=0.8pt,color=ffqqqq] (1.,-2.4)-- (3.,-4.4);
\draw [line width=0.8pt,color=ffqqqq] (2.049497474683058,-3.4494974746830587) -- (1.9363603896932107,-3.463639610306789);
\draw [line width=0.8pt,color=ffqqqq] (2.049497474683058,-3.4494974746830587) -- (2.063639610306788,-3.3363603896932115);
\draw [line width=0.8pt,color=ffqqqq] (1.,-2.4)-- (3.,-5.4);
\draw [line width=0.8pt,color=ffqqqq] (2.0388290137357656,-3.95824352060365) -- (1.9251154735095939,-3.949923017660272);
\draw [line width=0.8pt,color=ffqqqq] (2.0388290137357656,-3.95824352060365) -- (2.074884526490405,-3.850076982339731);
\draw [line width=0.8pt,color=ffqqqq] (1.,-4.4)-- (3.,-4.4);
\draw [line width=0.8pt,color=ffqqqq] (2.07,-4.4) -- (2.,-4.49);
\draw [line width=0.8pt,color=ffqqqq] (2.07,-4.4) -- (2.,-4.31);
\draw [line width=0.8pt,color=ffqqqq] (1.,-4.4)-- (3.,-5.4);
\draw [line width=0.8pt,color=ffqqqq] (2.062609903369994,-4.931304951684997) -- (1.9597507764050035,-4.980498447189992);
\draw [line width=0.8pt,color=ffqqqq] (2.062609903369994,-4.931304951684997) -- (2.040249223594997,-4.819501552810008);
\draw [line width=0.8pt,color=ffqqqq] (1.,-5.4)-- (3.,-5.4);
\draw [line width=0.8pt,color=ffqqqq] (2.07,-5.4) -- (2.,-5.49);
\draw [line width=0.8pt,color=ffqqqq] (2.07,-5.4) -- (2.,-5.31);
\draw [line width=0.8pt,color=ffqqqq] (-2.,3.)-- (-2.,2.);
\draw [line width=0.8pt,color=ffqqqq] (-2.,2.43) -- (-2.09,2.5);
\draw [line width=0.8pt,color=ffqqqq] (-2.,2.43) -- (-1.91,2.5);
\draw [line width=0.8pt,color=ffqqqq] (-2.,2.)-- (-2.,1.6);
\draw [line width=0.8pt,color=ffqqqq] (-2.,1.73) -- (-2.09,1.8);
\draw [line width=0.8pt,color=ffqqqq] (-2.,1.73) -- (-1.91,1.8);
\draw [line width=0.8pt,color=ffqqqq] (-2.,0.6)-- (-2.,0.);
\draw [line width=0.8pt,color=ffqqqq] (-2.,0.23) -- (-2.09,0.3);
\draw [line width=0.8pt,color=ffqqqq] (-2.,0.23) -- (-1.91,0.3);
\begin{scriptsize}
\draw [fill=black] (-4.,3.) circle (2.5pt);
\draw[color=black] (-4.2,3.1) node {$0$};
\draw [fill=black] (-2.,3.) circle (1.5pt);
\draw[color=black] (-1.8,3.1) node {$1$};
\draw [fill=black] (-4.,2.) circle (1.5pt);
\draw[color=black] (-4.2,2.1) node {$2$};
\draw [fill=black] (-2.,2.) circle (1.0pt);
\draw[color=black] (-1.8,2.1) node {$3$};
\draw [fill=black] (-4.,0.) circle (1.5pt);
\draw[color=black] (-4.2,-0.2) node {$2n_1-2$};
\draw [fill=black] (-2.,0.) circle (1.5pt);
\draw[color=black] (-1.8,-0.2) node {$2n_1-1$};
\draw [fill=black] (-4.,1.4) circle (0.5pt);
\draw [fill=black] (-4.,1.) circle (0.5pt);
\draw [fill=black] (-4.,0.6) circle (0.5pt);
\draw [fill=black] (-2.,1.1) circle (0.5pt);
\draw [fill=black] (-2.,0.8) circle (0.5pt);
\draw [fill=black] (-2.,1.4) circle (0.5pt);
\draw [fill=black] (-0.4,0.1) circle (0.5pt);
\draw [fill=black] (-0.4,0.3) circle (0.5pt);
\draw [fill=black] (-0.4,0.5) circle (0.5pt);
\draw [fill=black] (1.,3.) circle (1.5pt);
\draw[color=black] (0.9,3.15) node {$2s_2-1$};
\draw [fill=black] (3.,3.) circle (1.5pt);
\draw[color=black] (3.15,3.2) node {$2s_2-2$};
\draw [fill=black] (1.,1.) circle (1.5pt);
\draw[color=black] (0.9,1.2) node {$2n_1+3$};
\draw [fill=black] (3.,1.) circle (1.5pt);
\draw[color=black] (3.15,1.2) node {$2n_1+2$};
\draw [fill=black] (1.,0.) circle (1.5pt);
\draw[color=black] (0.9,-0.2) node {$2n_1+1$};
\draw [fill=black] (3.,0.) circle (1.5pt);
\draw[color=black] (3.15,-0.2) node {$2n_1$};
\draw [fill=black] (1.02,1.58) circle (0.5pt);
\draw [fill=black] (1.,2.) circle (0.5pt);
\draw [fill=black] (1.,2.4) circle (0.5pt);
\draw [fill=black] (3.,1.6) circle (0.5pt);
\draw [fill=black] (3.,2.) circle (0.5pt);
\draw [fill=black] (3.,2.4) circle (0.5pt);
\draw [fill=black] (1.,-2.4) circle (1.5pt);
\draw[color=black] (0.9,-2.2) node {$2s_k-1$};
\draw [fill=black] (1.,-4.4) circle (1.5pt);
\draw[color=black] (0.9,-4.2) node {$2s_{k-1}+3$};
\draw [fill=black] (1.,-5.4) circle (1.5pt);
\draw[color=black] (0.9,-5.6) node {$2s_{k-1}+1$};
\draw [fill=sqsqsq] (1.,-0.8) circle (0.5pt);
\draw [fill=black] (1.,-1.2) circle (0.5pt);
\draw [fill=black] (1.,-1.6) circle (0.5pt);
\draw [fill=black] (1.,-3.) circle (0.5pt);
\draw [fill=black] (1.0066666666666668,-3.4066666666666645) circle (0.5pt);
\draw [fill=black] (1.,-3.8) circle (0.5pt);
\draw [fill=black] (3.006666666666667,-2.4066666666666654) circle (1.5pt);
\draw[color=black] (3.15,-2.18) node {$2s_k-2$};
\draw [fill=black] (3.,-4.4) circle (1.5pt);
\draw[color=black] (3.15,-4.2) node {$2s_{k-1}+2$};
\draw [fill=black] (3.,-5.4) circle (1.5pt);
\draw[color=black] (3.15,-5.6) node {$2s_{k-1}$};
\draw [fill=black] (3.,-3.8) circle (0.5pt);
\draw [fill=black] (3.,-3.4) circle (0.5pt);
\draw [fill=black] (3.,-3.) circle (0.5pt);
\draw (0.4,4.2) node[anchor=north west] {\small{Le poset critique $t_{2n_1}((Q_{2n_2})^{\star})$}};
\draw (-4.2,4.2) node[anchor=north west] {\small{Le poset  $Q'_{2n_1}$}};

\draw (1.1,-1.4) node[anchor=north west] {\small{Le poset critique $t_{2s_{k-1}}((Q_{2n_k})^{\star})$}};

\end{scriptsize}
\end{tikzpicture}
\caption{\textbf{Le poset (-1)-critique $Q'_{2n_1,\ldots, 2n_k}$} (0 est le sommet non critique).}  
\label{poset (-1)-crit Q'}
\end{center} 
\end{figure}
\begin{figure}[h]
\begin{center}
\definecolor{ffqqqq}{rgb}{1.,0.,0.}
\definecolor{qqqqff}{rgb}{0.,0.,1.}
\begin{tikzpicture}[line cap=round,line join=round,>=triangle 45,x=1.5cm,y=1.5cm]
\draw [line width=0.4pt] (-4.6,3.4)-- (-3.6,3.4);
\draw [line width=0.4pt] (-3.6,3.4)-- (-3.6,-0.4);
\draw [line width=0.4pt] (-3.6,-0.4)-- (-4.6,-0.4);
\draw [line width=0.4pt] (-4.6,-0.4)-- (-4.6,3.4);
\draw [line width=0.8pt,color=qqqqff] (-2.4,3.4)-- (-1.4,3.4);
\draw [line width=0.8pt,color=qqqqff] (-1.4,3.4)-- (-1.38,-0.39333333333333304);
\draw [line width=0.8pt,color=qqqqff] (-1.38,-0.39333333333333304)-- (-2.4,-0.4);
\draw [line width=0.8pt,color=qqqqff] (-2.4,-0.4)-- (-2.4,3.4);
\draw [line width=0.8pt,color=qqqqff] (1.4,3.4)-- (2.4,3.4);
\draw [line width=0.8pt,color=qqqqff] (2.4,3.4)-- (2.4,-0.4);
\draw [line width=0.8pt,color=qqqqff] (2.4,-0.4)-- (1.4,-0.4);
\draw [line width=0.8pt,color=qqqqff] (1.4,-0.4)-- (1.4,3.4);
\draw [line width=0.4pt] (3.6,3.4)-- (4.6,3.4);
\draw [line width=0.4pt] (4.6,3.4)-- (4.6,-0.4);
\draw [line width=0.4pt] (4.6,-0.4)-- (3.6,-0.4);
\draw [line width=0.4pt] (3.6,-0.4)-- (3.6,3.4);
\draw [line width=0.8pt,color=ffqqqq] (2.,3.)-- (4.,3.);
\draw [line width=0.8pt,color=ffqqqq] (3.07,3.) -- (3.,2.91);
\draw [line width=0.8pt,color=ffqqqq] (3.07,3.) -- (3.,3.09);
\draw [line width=0.8pt,color=ffqqqq] (2.,3.)-- (4.,1.);
\draw [line width=0.8pt,color=ffqqqq] (3.049497474683058,1.9505025253169417) -- (2.9363603896932107,1.9363603896932107);
\draw [line width=0.8pt,color=ffqqqq] (3.049497474683058,1.9505025253169417) -- (3.063639610306788,2.0636396103067893);
\draw [line width=0.8pt,color=ffqqqq] (2.,1.)-- (4.,1.);
\draw [line width=0.8pt,color=ffqqqq] (3.07,1.) -- (3.,0.91);
\draw [line width=0.8pt,color=ffqqqq] (3.07,1.) -- (3.,1.09);
\draw [line width=0.8pt,color=ffqqqq] (2.,1.)-- (4.,0.);
\draw [line width=0.8pt,color=ffqqqq] (3.062609903369994,0.4686950483150028) -- (2.9597507764050035,0.4195015528100071);
\draw [line width=0.8pt,color=ffqqqq] (3.062609903369994,0.4686950483150028) -- (3.040249223594997,0.5804984471899923);
\draw [line width=0.8pt,color=ffqqqq] (2.,0.)-- (4.,0.);
\draw [line width=0.8pt,color=ffqqqq] (3.07,0.) -- (3.,-0.09);
\draw [line width=0.8pt,color=ffqqqq] (3.07,0.) -- (3.,0.09);
\draw [line width=0.8pt,color=ffqqqq] (2.,3.)-- (4.,0.);
\draw [line width=0.8pt,color=ffqqqq] (3.0388290137357656,1.4417564793963507) -- (2.9251154735095937,1.4500769823397293);
\draw [line width=0.8pt,color=ffqqqq] (3.0388290137357656,1.4417564793963507) -- (3.074884526490405,1.5499230176602705);
\draw [line width=0.8pt,color=ffqqqq] (-4.,3.)-- (-2.,3.);
\draw [line width=0.8pt,color=ffqqqq] (-2.93,3.) -- (-3.,2.91);
\draw [line width=0.8pt,color=ffqqqq] (-2.93,3.) -- (-3.,3.09);
\draw [line width=0.8pt,color=ffqqqq] (-4.,3.)-- (-2.,2.);
\draw [line width=0.8pt,color=ffqqqq] (-2.9373900966300064,2.468695048315003) -- (-3.040249223594997,2.419501552810008);
\draw [line width=0.8pt,color=ffqqqq] (-2.9373900966300064,2.468695048315003) -- (-2.9597507764050044,2.5804984471899926);
\draw [line width=0.8pt,color=ffqqqq] (-4.,3.)-- (-2.,0.);
\draw [line width=0.8pt,color=ffqqqq] (-2.961170986264234,1.4417564793963507) -- (-3.074884526490406,1.4500769823397293);
\draw [line width=0.8pt,color=ffqqqq] (-2.961170986264234,1.4417564793963507) -- (-2.9251154735095937,1.5499230176602705);
\draw [line width=0.8pt,color=ffqqqq] (-4.,2.)-- (-2.,2.);
\draw [line width=0.8pt,color=ffqqqq] (-2.93,2.) -- (-3.,1.91);
\draw [line width=0.8pt,color=ffqqqq] (-2.93,2.) -- (-3.,2.09);
\draw [line width=0.8pt,color=ffqqqq] (-4.,2.)-- (-2.,0.);
\draw [line width=0.8pt,color=ffqqqq] (-2.9505025253169417,0.9505025253169419) -- (-3.0636396103067898,0.9363603896932108);
\draw [line width=0.8pt,color=ffqqqq] (-2.9505025253169417,0.9505025253169419) -- (-2.9363603896932107,1.0636396103067896);
\draw [line width=0.8pt,color=ffqqqq] (-4.,0.)-- (-2.,0.);
\draw [line width=0.8pt,color=ffqqqq] (-2.93,0.) -- (-3.,-0.09);
\draw [line width=0.8pt,color=ffqqqq] (-2.93,0.) -- (-3.,0.09);
\draw [line width=1.2pt,color=qqqqff] (1.4,1.0333333333333328)-- (-1.3874518330913919,1.0200310096673566);
\draw [line width=1.2pt,color=qqqqff] (-0.07372500559923689,1.0263003985265704) -- (0.00579686723708619,1.1266810328172723);
\draw [line width=1.2pt,color=qqqqff] (-0.07372500559923689,1.0263003985265704) -- (0.006751299671523157,0.9266833101834181);
\draw [line width=0.8pt,color=ffqqqq] (-2.,3.)-- (-2.,2.);
\draw [line width=0.8pt,color=ffqqqq] (-2.,2.43) -- (-2.09,2.5);
\draw [line width=0.8pt,color=ffqqqq] (-2.,2.43) -- (-1.91,2.5);
\draw [line width=0.8pt,color=ffqqqq] (-2.,2.)-- (-2.,1.6);
\draw [line width=0.8pt,color=ffqqqq] (-2.,1.73) -- (-2.09,1.8);
\draw [line width=0.8pt,color=ffqqqq] (-2.,1.73) -- (-1.91,1.8);
\draw [line width=0.8pt,color=ffqqqq] (-2.,0.6)-- (-2.,0.);
\draw [line width=0.8pt,color=ffqqqq] (-2.,0.23) -- (-2.09,0.3);
\draw [line width=0.8pt,color=ffqqqq] (-2.,0.23) -- (-1.91,0.3);
\draw [line width=1.2pt,color=qqqqff] (1.913333333333332,-0.4)-- (0.,-1.38);
\draw [line width=1.2pt,color=qqqqff] (0.885463197477972,-0.9264700695844524) -- (0.9110790796860996,-0.8009956635141334);
\draw [line width=1.2pt,color=qqqqff] (0.885463197477972,-0.9264700695844524) -- (1.0022542536472308,-0.9790043364858663);
\draw [line width=1.2pt,color=qqqqff] (0.,-1.38)-- (-1.8865578812473318,-0.39664416915847917);
\draw [line width=1.2pt,color=qqqqff] (-1.0142201455585458,-0.8513444502256549) -- (-0.8970568976816852,-0.7996455784106399);
\draw [line width=1.2pt,color=qqqqff] (-1.0142201455585458,-0.8513444502256549) -- (-0.9895009835656478,-0.9769985907478389);
\draw [line width=0.8pt,color=ffqqqq] (2.,1.)-- (2.,0.);
\draw [line width=0.8pt,color=ffqqqq] (2.,0.43) -- (1.91,0.5);
\draw [line width=0.8pt,color=ffqqqq] (2.,0.43) -- (2.09,0.5);
\draw [line width=0.8pt,color=ffqqqq] (2.,3.)-- (2.,2.4);
\draw [line width=0.8pt,color=ffqqqq] (2.,2.63) -- (1.91,2.7);
\draw [line width=0.8pt,color=ffqqqq] (2.,2.63) -- (2.09,2.7);
\draw [line width=0.8pt,color=ffqqqq] (2.,1.4)-- (2.,1.);
\draw [line width=0.8pt,color=ffqqqq] (2.,1.13) -- (1.91,1.2);
\draw [line width=0.8pt,color=ffqqqq] (2.,1.13) -- (2.09,1.2);
\begin{scriptsize}
\draw [fill=black] (-4.,3.) circle (1.5pt);
\draw[color=black] (-4.2,3.1) node {$1$};
\draw [fill=black] (-2.,3.) circle (1.5pt);
\draw[color=black] (-1.8,3.1) node {$2$};
\draw [fill=black] (-4.,2.) circle (1.5pt);
\draw[color=black] (-4.2,2.1) node {$3$};
\draw [fill=black] (-2.,2.) circle (1.0pt);
\draw[color=black] (-1.8,2.1) node {$4$};
\draw [fill=black] (-4.,0.) circle (1.5pt);
\draw[color=black] (-4.2,-0.2) node {$2n_1-1$};
\draw [fill=black] (-2.,0.) circle (1.5pt);
\draw[color=black] (-1.7,-0.1) node {$2n_1$};
\draw [fill=black] (-4.,1.4) circle (0.5pt);
\draw [fill=black] (-4.,1.) circle (0.5pt);
\draw [fill=black] (-4.,0.6) circle (0.5pt);
\draw [fill=black] (-2.,1.1) circle (0.5pt);
\draw [fill=black] (-2.,0.8) circle (0.5pt);
\draw [fill=black] (-2.,1.4) circle (0.5pt);
\draw [fill=black] (2.,3.) circle (1.5pt);
\draw[color=black] (1.8,3.2) node {$2s_2$};
\draw [fill=black] (4.,3.) circle (1.5pt);
\draw[color=black] (4.2,3.2) node {$2s_2-1$};
\draw [fill=black] (2.,1.) circle (1.5pt);
\draw[color=black] (1.8,1.2) node {$2n_1+4$};
\draw [fill=black] (4.,1.) circle (1.5pt);
\draw[color=black] (4.2,1.2) node {$2n_1+3$};
\draw [fill=black] (2.,0.) circle (1.5pt);
\draw[color=black] (1.8,-0.2) node {$2n_1+2$};
\draw [fill=black] (4.,0.) circle (1.5pt);
\draw[color=black] (4.2,-0.2) node {$2n_1+1$};
\draw [fill=black] (2.,2.2) circle (0.5pt);
\draw [fill=black] (4.,1.6) circle (0.5pt);
\draw [fill=black] (4.,2.) circle (0.5pt);
\draw [fill=black] (4.,2.4) circle (0.5pt);
\draw [fill=black] (0.,-1.38) circle (2.5pt);
\draw[color=black] (0.1,-1.1533333333333327) node {$0$};
\draw [fill=black] (2.,1.9) circle (0.5pt);
\draw [fill=black] (2.,1.6) circle (0.5pt);
\draw (-4.2,4.2) node[anchor=north west] {\small{Le poset $t_{1}(Q'_{2n_1})$}};
\draw (1.8,4.2) node[anchor=north west] {\small{Le poset  $t_{2n_1+1}((Q'_{2n_2})^{\star})$}};
\end{scriptsize}
\end{tikzpicture}
\caption{\textbf{Le poset (-1)-critique $R_{1,2n_1,2n_2}$} (0 est le sommet non critique).}  
\label{poset (-1)-crit H}
\end{center} 
\end{figure}
 
 Comme nous l'avons vu dans l'introduction, notre caractérisation des posets (-1)-critiques se base sur la reconnaissance des graphes de comparabilité de la la classe des graphes (-1)-critiques. Les propriétés et les structures des graphes de comparabilité et de leurs orientations ont été étudiées par de nombreux auteurs. Nous citons par exemples, Gallai \cite{strong interval}, Golumbic \cite{Golumbic.comparability graphs}, S. Földes et P.L. Hammer 
\cite{Hammer-Foldes}, Gilmore et Hoffman \cite{caract.comparabity graphs }, F. Maffray et M. Preissmann \cite{strong intervals}. 

Dans \cite{Hammer-Foldes}, les auteurs ont donné une caractérisation des graphes scindés qui sont de comparabilité. les graphes $G_1$, $G_2$, $G_3$ et $G_4$ de la  figure~\ref{les graphes de Hammer} interviennent dans cette caractérisation. Notons que, à isomorphisme près,\,   $G_2=\overline{G_1}$ et $G_4=\overline{G_3}$, et que seul le graphe $G_3$ est de comparabilité.

\begin{figure}[h]
\begin{center}
\begin{tikzpicture}[line cap=round,line join=round,>=triangle 45,x=0.75cm,y=0.75cm]
\draw [line width=0.8pt] (-5.,6.)-- (-5.,4.);
\draw [line width=0.8pt] (-5.,4.)-- (-6.,2.);
\draw [line width=0.8pt] (-6.,2.)-- (-4.,2.);
\draw [line width=0.8pt] (-4.,2.)-- (-5.,4.);
\draw [line width=0.8pt] (-6.,2.)-- (-6.,0.);
\draw [line width=0.8pt] (-4.,2.)-- (-4.,0.);
\draw [line width=0.8pt] (0.,4.)-- (-1.,2.);
\draw [line width=0.8pt] (-1.,2.)-- (1.,2.);
\draw [line width=0.8pt] (1.,2.)-- (0.,4.);
\draw [line width=0.8pt] (-1.,2.)-- (-2.,0.);
\draw [line width=0.8pt] (-2.,0.)-- (0.,0.);
\draw [line width=0.8pt] (0.,0.)-- (-1.,2.);
\draw [line width=0.8pt] (1.,2.)-- (0.,0.);
\draw [line width=0.8pt] (0.,0.)-- (2.,0.);
\draw [line width=0.8pt] (2.,0.)-- (1.,2.);
\draw [line width=0.8pt] (4.,1.)-- (6.,0.);
\draw [line width=0.8pt] (6.,0.)-- (6.,2.);
\draw [line width=0.8pt] (6.,2.)-- (4.,1.);
\draw [line width=0.8pt] (6.,2.)-- (8.,2.);
\draw [line width=0.8pt] (8.,2.)-- (8.,0.);
\draw [line width=0.8pt] (8.,0.)-- (6.,0.);
\draw [line width=0.8pt] (6.,2.)-- (8.,0.);
\draw [line width=0.8pt] (8.,2.)-- (6.,0.);
\draw [line width=0.8pt] (7.,4.)-- (6.,2.);
\draw [line width=0.8pt] (7.,4.)-- (8.,2.);
\draw [line width=0.8pt] (8.,2.)-- (10.,1.);
\draw [line width=0.8pt] (10.,1.)-- (8.,0.);
\draw [line width=0.8pt] (13.,2.)-- (12.,0.);
\draw [line width=0.8pt] (12.,0.)-- (14.,0.);
\draw [line width=0.8pt] (14.,0.)-- (13.,2.);
\draw [line width=0.8pt] (13.,2.)-- (15.,2.);
\draw [line width=0.8pt] (15.,2.)-- (14.,0.);
\draw [line width=0.8pt] (14.,0.)-- (16.,0.);
\draw [line width=0.8pt] (16.,0.)-- (15.,2.);
\draw [line width=0.8pt] (15.,2.)-- (15.,4.);
\draw [line width=0.8pt] (13.,2.)-- (13.,4.);
\draw [fill=black] (-6.,2.) circle (2.0pt);
\draw [fill=black] (-4.,2.) circle (2.0pt);
\draw [fill=black] (-5.,4.) circle (2.0pt);
\draw [fill=black] (-5.,6.) circle (2.0pt);
\draw [fill=black] (-6.,0.) circle (2.0pt);
\draw [fill=black] (-4.,0.) circle (2.0pt);
\draw [fill=black] (-2.,0.) circle (2.0pt);
\draw [fill=black] (2.,0.) circle (2.0pt);
\draw [fill=black] (1.,2.) circle (2.0pt);
\draw [fill=black] (-1.,2.) circle (2.0pt);
\draw [fill=black] (0.,4.) circle (2.0pt);
\draw [fill=black] (0.,0.) circle (2.0pt);
\draw [fill=black] (4.,1.) circle (2.0pt);
\draw [fill=black] (6.,2.) circle (2.0pt);
\draw [fill=black] (6.,0.) circle (2.0pt);
\draw [fill=black] (8.,0.) circle (2.0pt);
\draw [fill=black] (8.,2.) circle (2.0pt);
\draw [fill=black] (7.,4.) circle (2.0pt);
\draw [fill=black] (10.,1.) circle (2.0pt);

\draw[color=black] (-5,-0.95) node {$G_1$};
\draw[color=black] (0,-0.95) node {$G_2$};
\draw[color=black] (7,-0.95) node {$G_3$};
\draw[color=black] (14,-0.95) node {$G_4$};

\draw [fill=black] (12.,0.) circle (2.0pt);
\draw [fill=black] (14.,0.) circle (2.0pt);
\draw [fill=black] (13.,2.) circle (2.0pt);
\draw [fill=black] (16.,0.) circle (2pt);
\draw [fill=black] (15.,2.) circle (2.pt);
\draw [fill=black] (13.,4.) circle (2.pt);
\draw [fill=black] (15.,4.) circle (2.pt);
\end{tikzpicture}
\caption{\textbf{Les graphes $G_1, G_2, G_3, G_4$.}}  
\label{les graphes de Hammer}
\end{center} 
\end{figure}
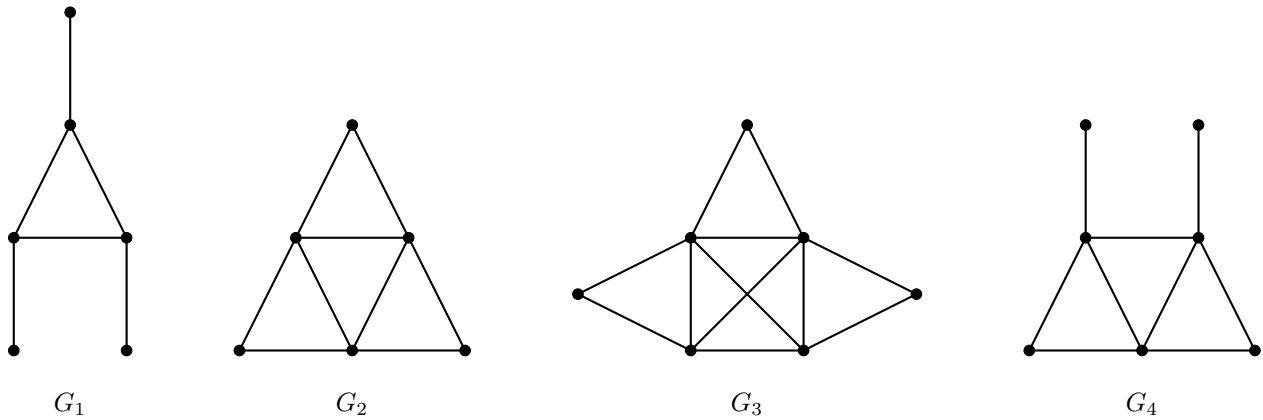
Nous rappelons les deux résultats suivants.
 \begin{theorem}\cite{Hammer-Foldes}\label{Hammer theo 1}
 Un graphe scindé $G$ est de comparabilité si et seulement s'il n'abrite pas $G_1, G_2$ et $G_4$.
 \end{theorem}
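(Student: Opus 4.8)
Je traiterais les deux implications s\'epar\'ement, la n\'ecessit\'e \'etant facile et la suffisance constituant l'essentiel.

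\textbf{N\'ecessit\'e.} La comparabilit\'e est une propri\'et\'e h\'er\'editaire: restreindre une orientation transitive \`a une partie des sommets fournit encore une orientation transitive, donc tout sous-graphe induit d'un graphe de comparabilit\'e est de comparabilit\'e. Il suffit d\`es lors de constater qu'aucun des graphes $G_1$, $G_2$, $G_4$ n'est de comparabilit\'e --- ce qui est d\'ej\`a signal\'e dans le texte, seul $G_3$ l'\'etant. Pour le justifier proprement, je v\'erifierais directement, au moyen de la relation de for\c{c}age de Gallai sur les arcs (deux arcs partageant une extr\'emit\'e et dont les autres extr\'emit\'es sont non adjacentes s'imposant mutuellement leur orientation), que chacun de ces trois graphes n'admet aucune orientation transitive; par exemple, dans $G_1$ (le filet), la classe d'implication d'une ar\^ete pendante parcourt le triangle central et force les deux orientations oppos\'ees d'une autre ar\^ete pendante, ce qui exclut toute orientation transitive. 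Par h\'er\'edit\'e, un graphe de comparabilit\'e omet donc $G_1$, $G_2$ et $G_4$.

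\textbf{Suffisance.} C'est le point central: supposant $G$ scind\'e et \emph{non} de comparabilit\'e, je montrerais qu'il abrite $G_1$, $G_2$ ou $G_4$. La voie que je privil\'egierais s'appuie sur le th\'eor\`eme de Gallai caract\'erisant les graphes de comparabilit\'e par une liste $\mathcal{F}$ de sous-graphes induits exclus minimaux (voir aussi \cite{Golumbic.comparability graphs}). Comme $G$ n'est pas de comparabilit\'e, il abrite un certain $F \in \mathcal{F}$. Or $G$ \'etant scind\'e et la classe des graphes scind\'es \'etant h\'er\'editaire et \'egale (F\"oldes et Hammer) \`a celle des graphes sans $2K_2$, $C_4$ ni $C_5$ induits, le graphe $F$ est lui-m\^eme scind\'e. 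Il resterait alors \`a parcourir la liste de Gallai et \`a en extraire les membres scind\'es: les familles infinies d'obstructions (cycles impairs induits de longueur $\geq 5$, leurs compl\'ementaires, ainsi que les autres familles de Gallai) contiennent toutes un $2K_2$, un $C_4$ ou un $C_5$ induit et sont \'ecart\'ees, et les obstructions sporadiques survivantes sont, \`a isomorphisme pr\`es, exactement $G_1$, $G_2$ et $G_4$. On conclurait que $G$ abrite l'un de ces trois graphes.

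\textbf{Principal obstacle et variante.} La difficult\'e se concentre dans l'\'etape d'extraction: il faut manipuler sans faute la liste de Gallai (plusieurs familles infinies et graphes isol\'es) et \'etablir que ses seuls membres scind\'es sont $G_1$, $G_2$, $G_4$. Pour contourner le recours \`a cette liste, j'envisagerais une preuve directe et constructive: en prenant $K$ clique maximale et $S = V(G) \setminus K$ stable, j'\'etudierais les traces $N(x) \cap K$ pour $x \in S$; l'omission de $G_1$ et de son compl\'ementaire $G_2$ devrait forcer ces voisinages \`a s'ordonner de fa\c{c}on suffisamment lin\'eaire (absence de triade crois\'ee de sommets stables), tandis que $G_4$ \'ecarterait l'ultime configuration obstruante. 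J'en d\'eduirais un ordre total sur $K$ et un placement coh\'erent de chaque sommet de $S$ relativement \`a son voisinage, fournissant une orientation transitive explicite. Dans cette variante, l'essentiel du travail r\'eside dans le lemme structurel affirmant que l'omission de ces trois seuls motifs suffit \`a garantir cet ordonnancement des voisinages; la transitivit\'e de l'orientation ainsi b\^atie se v\'erifie ensuite par une analyse de cas dirig\'ee par les motifs interdits.
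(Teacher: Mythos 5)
Un point de cadrage d'abord: le papier ne d\'emontre pas cet \'enonc\'e. Il s'agit d'un r\'esultat de F\"oldes et Hammer \cite{Hammer-Foldes}, rappel\'e tel quel (\og Nous rappelons les deux r\'esultats suivants \fg) et utilis\'e ensuite comme bo\^ite noire dans la preuve du corollaire~\ref{complément de H} et du lemme~\ref{k ne depasse pas 3 pour H}. Il n'y a donc pas de preuve interne \`a laquelle comparer votre tentative; votre proposition doit \^etre jug\'ee comme une reconstruction du r\'esultat cit\'e.

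Sur le fond: votre direction \og n\'ecessit\'e \fg\ est correcte et compl\`ete dans son principe --- l'h\'er\'edit\'e de la comparabilit\'e est imm\'ediate, et la non-comparabilit\'e de $G_1$ (le filet), de $G_2=\overline{G_1}$ et de $G_4=\overline{G_3}$ se v\'erifie effectivement par la relation de for\c{c}age de Gallai (ou, pour $G_2$ et $G_4$, par le th\'eor\`eme de Gallai selon lequel un graphe est de comparabilit\'e si et seulement si... non, attention: la comparabilit\'e n'est \emph{pas} stable par compl\'ementation, donc la non-comparabilit\'e de $G_2$ et $G_4$ demande sa propre v\'erification, que vous n'esquissez que pour $G_1$). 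En revanche, la direction \og suffisance \fg, qui est tout le contenu du th\'eor\`eme, reste \`a l'\'etat de programme dans les deux variantes que vous proposez. Dans la voie via la liste de Gallai, l'affirmation pivot --- \og les seuls membres scind\'es de la liste des obstructions minimales sont $G_1$, $G_2$, $G_4$ \fg\ --- est pr\'ecis\'ement \'equivalente au th\'eor\`eme \`a d\'emontrer, et vous l'assertez sans la v\'erifier; or cette v\'erification n'est pas anodine: la liste de Gallai (reproduite par exemple dans \cite{livre de Golumbic}) comporte une quinzaine de familles, certaines infinies, dont les plus petits membres peuvent \^etre scind\'es (c'est justement ainsi qu'apparaissent le filet et son compl\'ementaire), si bien qu'un argument uniforme du type \og chaque famille contient $2K_2$, $C_4$ ou $C_5$ \fg\ est faux tel quel et doit \^etre remplac\'e par une analyse param\`etre par param\`etre. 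De m\^eme, dans votre variante directe, le \og lemme structurel \fg\ sur l'ordonnancement des traces $N(x)\cap K$ est nomm\'e mais non \'etabli, alors que c'est l\`a que r\'eside toute la difficult\'e (c'est essentiellement la route suivie par F\"oldes et Hammer, via le nombre de Dilworth). En l'\'etat, votre texte identifie correctement la strat\'egie et les points durs, mais ne constitue pas une preuve: l'\'etape centrale est, dans les deux sc\'enarios, suppos\'ee plut\^ot que d\'emontr\'ee.
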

\begin{theorem}\cite{Hammer-Foldes}\label{Hammer theo 2} \'Etant donné un graphe scindé $G$, le graphe complémentaire $\overline{G}$ de $G$ est de comparabilité si et seulement si $G$ n'abrite pas $G_1, G_2$ et $G_3$.
\end{theorem}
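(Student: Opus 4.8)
Le plan est de ramener ce résultat au Théorème~\ref{Hammer theo 1} par dualité. L'ingrédient de base est que la classe des graphes scindés est stable par passage au complémentaire : si $V(G)$ se partitionne en une clique $K$ et un stable $S$, alors dans $\overline{G}$ la partie $K$ devient un stable et $S$ devient une clique, de sorte que $\overline{G}$ est lui aussi un graphe scindé. On peut donc lui appliquer le Théorème~\ref{Hammer theo 1}, qui donne : $\overline{G}$ est de comparabilité si et seulement si $\overline{G}$ n'abrite ni $G_1$, ni $G_2$, ni $G_4$.

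La deuxième observation, purement formelle, est que le passage au complémentaire commute avec l'extraction de sous-graphes induits : pour toute partie $X$ de $V(G)$, on a $\overline{G}[X] = \overline{G[X]}$. Il en résulte que, pour tout graphe $H$, le graphe $\overline{G}$ abrite $H$ si et seulement si $G$ abrite $\overline{H}$. Il ne reste plus qu'à traduire les trois conditions d'omission portant sur $\overline{G}$ en conditions portant sur $G$, à l'aide des isomorphismes rappelés avant l'énoncé, à savoir $G_2 \simeq \overline{G_1}$ et $G_4 \simeq \overline{G_3}$ (et donc aussi $G_1 \simeq \overline{G_2}$ et $G_3 \simeq \overline{G_4}$).

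Concrètement : $\overline{G}$ omet $G_1$ si et seulement si $G$ omet $\overline{G_1} \simeq G_2$ ; $\overline{G}$ omet $G_2$ si et seulement si $G$ omet $\overline{G_2} \simeq G_1$ ; et $\overline{G}$ omet $G_4$ si et seulement si $G$ omet $\overline{G_4} \simeq G_3$. La conjonction des trois conditions sur $\overline{G}$ équivaut donc exactement au fait que $G$ n'abrite ni $G_1$, ni $G_2$, ni $G_3$, ce qui est précisément la conclusion cherchée. On remarquera au passage qu'il est cohérent que $G_3$, et non $G_4$, figure dans le critère relatif à $G$, puisque $G_3$ est le seul des quatre graphes à être de comparabilité.

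Je n'anticipe pas de véritable difficulté : la preuve est essentiellement une dualisation du Théorème~\ref{Hammer theo 1}, le caractère scindé de $\overline{G}$ et la commutation entre complémentaire et sous-graphe induit étant tous deux immédiats. Le seul point à soigner est le suivi correct des trois couples de graphes complémentaires, et le fait de n'invoquer le Théorème~\ref{Hammer theo 1} que pour le graphe scindé $\overline{G}$, et non pour $G$ lui-même.
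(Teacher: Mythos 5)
Votre démonstration est correcte, mais il faut d'abord préciser un point de cadrage : le papier ne démontre pas cet énoncé. Le Théorème~\ref{Hammer theo 2} y est simplement \emph{rappelé} avec renvoi à Földes et Hammer \cite{Hammer-Foldes} (``Nous rappelons les deux résultats suivants''), si bien qu'il n'existe pas de preuve interne au papier à laquelle comparer la vôtre. Cela étant dit, votre réduction par dualité au Théorème~\ref{Hammer theo 1} est valable de bout en bout : la stabilité de la classe des graphes scindés par complémentation est immédiate (la clique et le stable échangent leurs rôles), l'identité $\overline{G}[X]=\overline{G[X]}$ donne bien que $\overline{G}$ abrite $H$ si et seulement si $G$ abrite $\overline{H}$, et la traduction des trois conditions d'omission utilise exactement les isomorphismes $G_2\simeq\overline{G_1}$ et $G_4\simeq\overline{G_3}$ que le papier consigne juste avant l'énoncé — elle en est donc le prolongement naturel. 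Notez que votre argument n'est pas une preuve indépendante mais une \emph{réduction} : il présuppose le Théorème~\ref{Hammer theo 1}, lui aussi seulement cité; admis celui-ci, les deux énoncés sont logiquement équivalents par votre dualisation, et c'est d'ailleurs dans le même esprit que le papier combine les deux théorèmes dans la preuve du Corollaire~\ref{complément de H}. Le seul soin nécessaire, que vous avez correctement identifié, est d'appliquer le Théorème~\ref{Hammer theo 1} au graphe scindé $\overline{G}$ et non à $G$, et de suivre sans erreur les deux paires complémentaires, ce qui explique bien pourquoi $G_3$ (le seul des quatre graphes qui soit de comparabilité) remplace $G_4$ dans le critère portant sur $G$.
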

Remarquons que pour tout entier   $k\geqslant 3$,  $H_{1, 2n_1, \ldots, 2n_k}[\{ 1,2,2n_1+1, 2n_1+2, 2s_2+1,2s_2+2\}]\simeq G_1$. Du Théorème~\ref{Hammer theo 1}, on en déduit que pour tout $k\geqslant 3$, le graphe scindé  $H_{1, 2n_1, \ldots, 2n_k}$ n'est pas de comparabilité.  De plus, comme $\overline{G_1}\simeq G_2$, alors, pour tout $k\geqslant 3$, le graphe $\overline{H_{1, 2n_1, \ldots, 2n_k}}$ abrite $G_2$, et donc d'après le théorème~\ref{Hammer theo 2}, $\overline{H_{1, 2n_1, \ldots, 2n_k}}$ n'est pas un graphe de comparabilité.
 Nous obtenons alors: 

\begin{lemme} \label{k ne depasse pas 3 pour H}
Pour tout entier $ k\geqslant 3$ et pour tous entiers strictement positif $n_1$ et $n_2$, le graphe $H_{1, 2n_1, \ldots, 2n_k}$ et  son complémentaire 
  $ \overline{H_{1, 2n_1, \ldots, 2n_k}}$ ne sont pas des graphes de comparabilité.
 \end{lemme} 
  Du Lemme~\ref{k ne depasse pas 3 pour H},  nous avons qu'a traiter, le cas où $k=2$ pour le graphe \\
  (-1)-critique $H_{1, 2n_1, \ldots, 2n_k}$ et son complémentaire.  Clairement, nous avons le résultat suivant.
  \begin{Fait}\label{R est un poset -1critique}
   Pour tous entiers strictement positifs $n_1$ et $n_2$, le digraphe $R_{1,2n_1,2n_2}$ est une orientation transitive du graphe  $H_{1,2n_1,2n_2}$. Le graphe (-1)-critique $H_{1,2n_1,2n_2}$ est alors un graphe de comparabilité. 
   \end{Fait}
 \begin{corollaire}\label{complément de H} Pour tous entiers strictement positifs  $n_1$ et $n_2$, le graphe $\overline{H_{1,2n_1,2n_2}}$ est de comparabilité si et seulement si le graphe  $H_{1,2n_1,2n_2}$ n'abrite pas $G_3$.
\end{corollaire}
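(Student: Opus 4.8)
Le plan est de combiner les deux critères de F\"oldes et Hammer (Théorèmes~\ref{Hammer theo 1} et~\ref{Hammer theo 2}) avec le fait, déjà établi, que $H_{1,2n_1,2n_2}$ est un graphe de comparabilité. L'observation de départ est que $H_{1,2n_1,2n_2}$ est un graphe scindé --- ses sommets pairs forment une clique et ses sommets impairs un stable --- de sorte que ces deux théorèmes, qui portent sur les graphes scindés, lui sont applicables.

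D'abord, j'invoquerais le Fait~\ref{R est un poset -1critique} : le digraphe $R_{1,2n_1,2n_2}$ étant une orientation transitive de $H_{1,2n_1,2n_2}$, ce dernier est un graphe de comparabilité. Par le Théorème~\ref{Hammer theo 1}, un graphe scindé de comparabilité n'abrite ni $G_1$, ni $G_2$, ni $G_4$ ; en particulier, $H_{1,2n_1,2n_2}$ omet $G_1$ et $G_2$.

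Ensuite, j'appliquerais le Théorème~\ref{Hammer theo 2} au graphe scindé $H_{1,2n_1,2n_2}$ : son complémentaire $\overline{H_{1,2n_1,2n_2}}$ est de comparabilité si et seulement si $H_{1,2n_1,2n_2}$ n'abrite ni $G_1$, ni $G_2$, ni $G_3$. Comme $G_1$ et $G_2$ sont déjà omis d'après l'étape précédente, cette liste de trois conditions se réduit à la seule condition que $H_{1,2n_1,2n_2}$ n'abrite pas $G_3$, ce qui est exactement l'équivalence cherchée.

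Il n'y a pas d'obstacle sérieux : toute la substance a été absorbée en amont, d'une part par le Fait~\ref{R est un poset -1critique} (l'existence de l'orientation transitive explicite $R_{1,2n_1,2n_2}$), d'autre part par les deux théorèmes de F\"oldes et Hammer. Le seul point à exploiter est l'asymétrie entre les listes d'obstructions $\{G_1,G_2,G_4\}$ du Théorème~\ref{Hammer theo 1} et $\{G_1,G_2,G_3\}$ du Théorème~\ref{Hammer theo 2} : c'est parce que $G_1$ et $G_2$ sont communes aux deux listes, et automatiquement exclues par la comparabilité de $H_{1,2n_1,2n_2}$, que seule l'exclusion de $G_3$ subsiste comme critère effectif.
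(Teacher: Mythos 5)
Votre démonstration est correcte et suit essentiellement le même chemin que celle de l'article : on utilise le Fait~\ref{R est un poset -1critique} pour garantir que $H_{1,2n_1,2n_2}$ est de comparabilité, le Théorème~\ref{Hammer theo 1} pour en déduire l'omission de $G_1$ et $G_2$, puis le Théorème~\ref{Hammer theo 2} pour réduire le critère à la seule omission de $G_3$. La seule différence est de présentation : l'article traite séparément la condition suffisante (la nécessaire étant immédiate par le Théorème~\ref{Hammer theo 2}), tandis que vous formulez directement la réduction de la liste d'obstructions, ce qui revient au même argument.
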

 \begin{proof}[\textbf{Preuve}]
 Le graphe $G=H_{1,2n_1,2n_2}$ étant scindé, d'après le théorème~\ref{Hammer theo 2}, il suffit de montrer la condition suffisante du corollaire. Par hypothèse, $G$ n'abrite pas $G_3$. Comme de plus $G$ est un graphe de comparabilité, alors, d'après  le théorème~\ref{Hammer theo 1}, $G$ n'abrite pas $G_1$ et $G_2$. Il s'ensuit du théorème~\ref{Hammer theo 2}, que le graphe $\overline{G}=\overline{H_{1,2n_1,2n_2}}$ est de comparabilité comme voulu. 
\end{proof}

 Nous proposons dans ce que suit une orientation  transitive de   $\overline{H_{1,2n_1,2n_2}}$, pour tout entiers strictement positifs $n_1$ et 
 $ n_2$. Pour ce faire, nous considérons d'abord, pour tout $n\geqslant 1$, le digraphe transitif $R'_{2n}$ défini sur $\{0\ldots,2n-1\}$ par $A(R'_{2n})= \{ (2i,2j), (2i+1,2j): 0\leqslant i< j\leqslant n-1\}$.
 
  Maintenant, introduisons le digraphe  $R'_{1, 2n_1,  2n_2}$  défini sur $\{0, \ldots, 2s_2\}$ par $A(R'_{1, 2n_1, 2n_2}) =  A(t_1(R'_{2n_1}))\cup A(t_{2n_1+1}((R'_{2n_2})^{\star}))\cup 
\{(2q-1,p): 0 \leqslant p \leqslant 2 n_1, n_1+1\leqslant q \leqslant~ s_2\} 
 \cup  \{(2q,2p-1): 1 \leqslant p \leqslant n_1 \ \text{et} \  n_1+1\leqslant q \leqslant s_2 \ \text{ou} \ q=0\}$ (voir Figure~\ref{poset (-1)-crit R'}). 

\begin{figure}[h]
\begin{center}
\definecolor{qqqqff}{rgb}{0.,0.,1.}
\definecolor{ffqqqq}{rgb}{1.,0.,0.}
\begin{tikzpicture}[line cap=round,line join=round,>=triangle 45,x=1.2cm,y=1.0cm]
\draw [line width=0.4pt] (-5.6,5.6)-- (-4.4,5.6);
\draw [line width=0.4pt] (-4.4,5.6)-- (-4.4,-0.6);
\draw [line width=0.4pt] (-5.6,5.6)-- (-5.6,-0.6);
\draw [line width=0.4pt] (-5.6,-0.6)-- (-4.4,-0.6);
\draw [line width=0.4pt] (-2.6,5.6)-- (-1.4,5.6);
\draw [line width=0.4pt] (-1.4,5.6)-- (-1.4,-0.6);
\draw [line width=0.4pt] (-1.4,-0.6)-- (-2.6,-0.6);
\draw [line width=0.4pt] (-2.6,-0.6)-- (-2.6,5.6);
\draw [line width=0.8pt,color=ffqqqq] (-2.,5.)-- (-5.,4.);
\draw [line width=0.8pt,color=ffqqqq] (-3.5996117462953032,4.466796084568232) -- (-3.542690748412273,4.628072245236819);
\draw [line width=0.8pt,color=ffqqqq] (-3.5996117462953032,4.466796084568232) -- (-3.457309251587726,4.37192775476318);
\draw [line width=0.8pt,color=ffqqqq] (-2.,5.)-- (-5.,3.);
\draw [line width=0.8pt,color=ffqqqq] (-3.587365280905473,3.9417564793963504) -- (-3.574884526490405,4.112326789735608);
\draw [line width=0.8pt,color=ffqqqq] (-3.587365280905473,3.9417564793963504) -- (-3.4251154735095932,3.8876732102643903);
\draw [line width=0.8pt,color=ffqqqq] (-2.,5.)-- (-5.,0.);
\draw [line width=0.8pt,color=ffqqqq] (-3.5540220543198897,2.409963242800183) -- (-3.615761544971192,2.5694569269827165);
\draw [line width=0.8pt,color=ffqqqq] (-3.5540220543198897,2.409963242800183) -- (-3.3842384550288056,2.4305430730172843);
\draw [line width=0.8pt,color=ffqqqq] (-2.,4.)-- (-5.,3.);
\draw [line width=0.8pt,color=ffqqqq] (-3.5996117462953032,3.466796084568232) -- (-3.542690748412273,3.6280722452368193);
\draw [line width=0.8pt,color=ffqqqq] (-3.5996117462953032,3.466796084568232) -- (-3.457309251587726,3.3719277547631803);
\draw [line width=0.8pt,color=ffqqqq] (-2.,4.)-- (-5.,0.);
\draw [line width=0.8pt,color=ffqqqq] (-3.563,1.916) -- (-3.608,2.081);
\draw [line width=0.8pt,color=ffqqqq] (-3.563,1.916) -- (-3.392,1.919);
\draw [line width=0.8pt,color=ffqqqq] (-2.,1.)-- (-5.,0.);
\draw [line width=0.8pt,color=ffqqqq] (-3.5996117462953032,0.46679608456823174) -- (-3.542690748412273,0.6280722452368194);
\draw [line width=0.8pt,color=ffqqqq] (-3.5996117462953032,0.46679608456823174) -- (-3.457309251587726,0.37192775476318035);
\draw [line width=0.4pt] (1.44,5.62)-- (2.6,5.6);
\draw [line width=0.4pt] (2.6,5.6)-- (2.6,-0.6);
\draw [line width=0.4pt] (2.6,-0.6)-- (1.4,-0.6);
\draw [line width=0.4pt] (1.4,-0.6)-- (1.44,5.62);
\draw [line width=0.4pt] (4.4,5.6)-- (5.6,5.6);
\draw [line width=0.4pt] (5.6,5.6)-- (5.6,-0.6);
\draw [line width=0.4pt] (5.6,-0.6)-- (4.4,-0.6);
\draw [line width=0.4pt] (4.4,-0.6)-- (4.4,5.6);
\draw [line width=0.8pt,color=qqqqff] (5.,5.)-- (5.,4.);
\draw [line width=0.8pt,color=qqqqff] (5.,4.395) -- (4.865,4.5);
\draw [line width=0.8pt,color=qqqqff] (5.,4.395) -- (5.135,4.5);
\draw [line width=0.8pt,color=qqqqff] (5.,1.)-- (5.,0.);
\draw [line width=0.8pt,color=qqqqff] (5.,0.395) -- (4.865,0.5);
\draw [line width=0.8pt,color=qqqqff] (5.,0.395) -- (5.135,0.5);
\draw [line width=0.8pt,color=ffqqqq] (5.,1.)-- (2.,0.);
\draw [line width=0.8pt,color=ffqqqq] (3.4003882537046968,0.46679608456823174) -- (3.4573092515877275,0.6280722452368194);
\draw [line width=0.8pt,color=ffqqqq] (3.4003882537046968,0.46679608456823174) -- (3.542690748412273,0.37192775476318035);
\draw [line width=0.8pt,color=ffqqqq] (5.,4.)-- (2.,0.);
\draw [line width=0.8pt,color=ffqqqq] (3.437,1.916) -- (3.392,2.081);
\draw [line width=0.8pt,color=ffqqqq] (3.437,1.916) -- (3.608,1.919);
\draw [line width=0.8pt,color=ffqqqq] (5.,4.)-- (2.,3.);
\draw [line width=0.8pt,color=ffqqqq] (3.4003882537046968,3.466796084568232) -- (3.4573092515877275,3.6280722452368193);
\draw [line width=0.8pt,color=ffqqqq] (3.4003882537046968,3.466796084568232) -- (3.542690748412273,3.3719277547631803);
\draw [line width=0.8pt,color=ffqqqq] (5.,5.)-- (2.,3.);
\draw [line width=0.8pt,color=ffqqqq] (3.4126347190945276,3.9417564793963504) -- (3.425115473509594,4.112326789735608);
\draw [line width=0.8pt,color=ffqqqq] (3.4126347190945276,3.9417564793963504) -- (3.5748845264904063,3.8876732102643903);
\draw [line width=0.8pt,color=ffqqqq] (5.,5.)-- (2.,4.);
\draw [line width=0.8pt,color=ffqqqq] (3.4003882537046968,4.466796084568232) -- (3.4573092515877275,4.628072245236819);
\draw [line width=0.8pt,color=ffqqqq] (3.4003882537046968,4.466796084568232) -- (3.542690748412273,4.37192775476318);
\draw [line width=0.8pt,color=ffqqqq] (5.,5.)-- (2.,0.);
\draw [line width=0.8pt,color=ffqqqq] (3.445977945680112,2.409963242800183) -- (3.384238455028809,2.5694569269827165);
\draw [line width=0.8pt,color=ffqqqq] (3.445977945680112,2.409963242800183) -- (3.6157615449711957,2.4305430730172843);
\draw [line width=0.8pt,color=qqqqff] (-5.,5.)-- (-5.,4.);
\draw [line width=0.8pt,color=qqqqff] (-5.,4.395) -- (-5.135,4.5);
\draw [line width=0.8pt,color=qqqqff] (-5.,4.395) -- (-4.865,4.5);
\draw [line width=0.8pt,color=qqqqff] (-5.,4.)-- (-5.,3.);
\draw [line width=0.8pt,color=qqqqff] (-5.,3.395) -- (-5.135,3.5);
\draw [line width=0.8pt,color=qqqqff] (-5.,3.395) -- (-4.865,3.5);
\draw [line width=0.8pt,color=qqqqff] (-5.,0.8)-- (-5.,0.);
\draw [line width=0.8pt,color=qqqqff] (-5.,0.295) -- (-5.135,0.4);
\draw [line width=0.8pt,color=qqqqff] (-5.,0.295) -- (-4.865,0.4);
\draw [line width=0.8pt,color=qqqqff] (-5.,3.)-- (-5.,2.4);
\draw [line width=0.8pt,color=qqqqff] (-5.,2.595) -- (-5.135,2.7);
\draw [line width=0.8pt,color=qqqqff] (-5.,2.595) -- (-4.865,2.7);
\draw [line width=0.8pt,color=qqqqff] (5.,4.)-- (5.,3.4);
\draw [line width=0.8pt,color=qqqqff] (5.,3.595) -- (4.865,3.7);
\draw [line width=0.8pt,color=qqqqff] (5.,3.595) -- (5.135,3.7);
\draw [shift={(0.,-4.)},line width=0.8pt,color=ffqqqq]  plot[domain=1.0906212971913698:2.0526087419383687,variable=\t]({1.*10.824047302187846*cos(\t r)+0.*10.824047302187846*sin(\t r)},{0.*10.824047302187846*cos(\t r)+1.*10.824047302187846*sin(\t r)});
\draw [shift={(1.58,-2.02)},line width=0.8pt,color=ffqqqq]  plot[domain=1.148925040371887:1.9948500778320195,variable=\t]({1.*8.352293098305397*cos(\t r)+0.*8.352293098305397*sin(\t r)},{0.*8.352293098305397*cos(\t r)+1.*8.352293098305397*sin(\t r)});
\draw [shift={(-1.56,8.76)},line width=0.8pt,color=ffqqqq]  plot[domain=4.3601909854188765:5.062703192620091,variable=\t]({1.*9.972121138453945*cos(\t r)+0.*9.972121138453945*sin(\t r)},{0.*9.972121138453945*cos(\t r)+1.*9.972121138453945*sin(\t r)});
\draw [line width=1.6pt,color=ffqqqq] (-1.6550560857493173,-1.210560820792924) -- (-1.520718202337079,-1.0450212995893147);
\draw [line width=1.6pt,color=ffqqqq] (-1.6550560857493173,-1.210560820792924) -- (-1.519396135938303,-1.3750186513066236);
\draw [line width=1.6pt,color=ffqqqq] (0.30378042556929596,6.817981720282189) -- (0.44535809817232364,6.977373572709816);
\draw [line width=1.6pt,color=ffqqqq] (0.30378042556929596,6.817981720282189) -- (0.43198082099775564,6.647644822893553);
\draw [line width=1.6pt,color=ffqqqq] (0.36177576679162954,6.238943795997798) -- (0.4741970029940582,6.420083150227401);
\draw [line width=1.6pt,color=ffqqqq] (0.36177576679162954,6.238943795997798) -- (0.5170665721845218,6.092879543833119);
\draw [line width=0.8pt,color=qqqqff] (5.,1.8)-- (5.,1.);
\draw [line width=0.8pt,color=qqqqff] (5.,1.295) -- (4.865,1.4);
\draw [line width=0.8pt,color=qqqqff] (5.,1.295) -- (5.135,1.4);

\draw [line width=0.8pt,color=ffqqqq] (5.02,-0.6)-- (0.,-2.6);
\draw [line width=0.8pt,color=ffqqqq] (2.4124564368305195,-1.6388619773583588) -- (2.460034600539253,-1.4745868473535262);
\draw [line width=0.8pt,color=ffqqqq] (2.4124564368305195,-1.6388619773583588) -- (2.5599653994607467,-1.7254131526464749);

\draw [line width=0.8pt,color=ffqqqq] (0.,-2.6)-- (-5.,-0.6);
\draw [line width=0.8pt,color=ffqqqq] (-2.597490052542952,-1.5610039789828198) -- (-2.449862258692195,-1.4746556467304919);
\draw [line width=0.8pt,color=ffqqqq] (-2.597490052542952,-1.5610039789828198) -- (-2.5501377413078035,-1.7253443532695116);

\begin{scriptsize}
\draw [fill=black] (-5.,5.) circle (1.5pt);
\draw[color=black] (-5.2,5.1) node {$1$};
\draw [fill=black] (-5.,4.) circle (1.5pt);
\draw[color=black] (-5.2,4.1) node {$3$};
\draw [fill=black] (-5.,0.) circle (1.5pt);
\draw[color=black] (-5.1,-0.2) node {$2n_1-1$};
\draw [fill=black] (-2.,5.) circle (1.5pt);
\draw[color=black] (-1.8,5.1) node {$2$};
\draw [fill=black] (-2.,4.) circle (1.5pt);
\draw[color=black] (-1.8,4.1) node {$4$};
\draw [fill=black] (-2.,1.) circle (1.5pt);
\draw[color=black] (-1.9,1.29) node {$2n_1-2$};
\draw [fill=black] (-2.,0.) circle (1.5pt);
\draw[color=black] (-1.9,-0.3) node {$2n_1$};
\draw [fill=black] (-5.,3.) circle (1.5pt);
\draw[color=black] (-5.2,3.1) node {$5$};
\draw [fill=black] (2.,5.) circle (1.5pt);
\draw[color=black] (1.7,5.2) node {$2s_2$};
\draw [fill=black] (2.,4.) circle (1.5pt);
\draw[color=black] (1.8,4.2) node {$2s_2-2$};
\draw [fill=black] (2.,3.) circle (1.5pt);
\draw[color=black] (1.8,3.2) node {$2s_2-4$};
\draw [fill=black] (2.,0.) circle (1.5pt);
\draw[color=black] (1.9,-0.2) node {$2n_1+2$};
\draw [fill=black] (5.,5.) circle (1.5pt);
\draw[color=black] (5,5.2) node {$2s_2-1$};
\draw [fill=black] (5.,4.) circle (1.5pt);
\draw[color=black] (5,4.2) node {$2s_2-3$};
\draw [fill=black] (5.,1.) circle (1.5pt);
\draw[color=black] (5,1.2) node {$2n_1+3$};
\draw [fill=black] (5.,0.) circle (1.5pt);
\draw[color=black] (4.9,-0.2) node {$2n_1+1$};
\draw [fill=black] (-5.,2.) circle (0.5pt);
\draw [fill=black] (-5.,1.6) circle (0.5pt);
\draw [fill=black] (-5.,1.2) circle (0.5pt);
\draw [fill=black] (-2.,3.2) circle (0.5pt);
\draw [fill=black] (-2.,2.6) circle (0.5pt);
\draw [fill=black] (-2.,1.8) circle (0.5pt);
\draw [fill=black] (2.,2.) circle (0.5pt);
\draw [fill=black] (2.,1.6) circle (0.5pt);
\draw [fill=black] (2.,1.2) circle (0.5pt);
\draw [fill=black] (5.,2.2) circle (0.5pt);
\draw [fill=black] (5.,3.) circle (0.5pt);
\draw [fill=black] (5.,2.6) circle (0.5pt);

\draw [fill=black] (0.,-2.6) circle (2.5pt);
\draw[color=black] (0.1,-2.26) node {$0$};

\draw[color=black] (-3.1,-0.95) node {Le poset $t_1(R'_{2n_1})$};
\draw[color=black] (3.5,-0.95) node {Le poset  $t_{2n_1+1}((R'_{2n_2})^{\star})$};
\end{scriptsize}

\end{tikzpicture}

\caption{\textbf{Le poset (-1)-critique $R'_{1,2n_1,2n_2}$} (0 est le sommet non critique).}  
\label{poset (-1)-crit R'}
\end{center} 
\end{figure} 
 
  \begin{Fait}\label{R' est -1critique} Pour tous entiers strictement positifs $n_1$ et $n_2$, le digraphe $R'_{1, 2n_1,  2n_2}$ est une orientation transitive du graphe  $\overline{H_{1,2n_1,2n_2}}$. Le graphe (-1)-critique $\overline{H_{1,2n_1,2n_2}}$ est alors un graphe de comparabilité.
 \end{Fait}
 
 
 Enfin, il reste a voir, si les graphes complémentaires de $G_{2n_1, \ldots, 2n_k}$ et de $G'_{2n_1, \ldots, 2n_k}$ sont de comparabilité. Pour tout $k\geqslant 3$, Nous conjecturons que ces derniers graphes ne sont pas de comparabilité. Pour $k=2$, nous construisons, ci-après, une orientation transitive du graphe complémentaire de $G'_{2n_1, 2n_2}$, montrant ainsi la comparabilité de ce graphe (-1)-critique. 
 
 Pour tout entier $n\geqslant 1$, le digraphe transitif $R_{2n}$ est défini sur  $\{0\ldots,2n-1\}$ par $A(R_{2n})=A(R'_{2n})\cup \{ (2p+1,2q+1): 0\leqslant p< q 
 \leqslant n-1\}$. Pour tous entiers $n_1$ et $n_2$ tels que $n_1\geqslant 2$ et $n_2\geqslant 1$, désignons par $O_{2n_1,2n_2}$ le digraphe défini sur 
 $\{0, \ldots, 2s_2-1\}$ par $A(O_{ 2n_1, 2n_2}) =  A(R'_{2n_1})\cup A(t_{2n_1}((R_{2n_2})^{\star}))\cup \{(2q,p): 0 \leqslant p \leqslant 2 n_1-1 \ \text{et} \  n_1\leqslant q \leqslant~ s_2-1\} \cup  \{(2q+1,2p): 0 \leqslant p \leqslant n_1-1 \ \text{et} \  n_1\leqslant q \leqslant s_2 -1\}$ (voir Figure~\ref{poset (-1)-crit O}). Nous vérifions facilement le résultat suivant. 
 \begin{figure}[h]
\begin{center}
\definecolor{qqqqff}{rgb}{0.,0.,1.}
\definecolor{ffqqqq}{rgb}{1.,0.,0.}
\begin{tikzpicture}[line cap=round,line join=round,>=triangle 45,x=1.2cm,y=1.0cm]
\draw [line width=0.4pt] (-5.6,5.6)-- (-4.4,5.6);
\draw [line width=0.4pt] (-4.4,5.6)-- (-4.4,-0.6);
\draw [line width=0.4pt] (-5.6,5.6)-- (-5.6,-0.6);
\draw [line width=0.4pt] (-5.6,-0.6)-- (-4.4,-0.6);
\draw [line width=0.4pt] (-2.6,5.6)-- (-1.4,5.6);
\draw [line width=0.4pt] (-1.4,5.6)-- (-1.4,-0.6);
\draw [line width=0.4pt] (-1.4,-0.6)-- (-2.6,-0.6);
\draw [line width=0.4pt] (-2.6,-0.6)-- (-2.6,5.6);
\draw [line width=0.8pt,color=ffqqqq] (-2.,5.)-- (-5.,4.);
\draw [line width=0.8pt,color=ffqqqq] (-3.5996117462953032,4.466796084568232) -- (-3.542690748412273,4.628072245236819);
\draw [line width=0.8pt,color=ffqqqq] (-3.5996117462953032,4.466796084568232) -- (-3.457309251587726,4.37192775476318);
\draw [line width=0.8pt,color=ffqqqq] (-2.,5.)-- (-5.,3.);
\draw [line width=0.8pt,color=ffqqqq] (-3.587365280905473,3.9417564793963504) -- (-3.574884526490405,4.112326789735608);
\draw [line width=0.8pt,color=ffqqqq] (-3.587365280905473,3.9417564793963504) -- (-3.4251154735095932,3.8876732102643903);
\draw [line width=0.8pt,color=ffqqqq] (-2.,5.)-- (-5.,0.);
\draw [line width=0.8pt,color=ffqqqq] (-3.5540220543198897,2.409963242800183) -- (-3.615761544971192,2.5694569269827165);
\draw [line width=0.8pt,color=ffqqqq] (-3.5540220543198897,2.409963242800183) -- (-3.3842384550288056,2.4305430730172843);
\draw [line width=0.8pt,color=ffqqqq] (-2.,4.)-- (-5.,3.);
\draw [line width=0.8pt,color=ffqqqq] (-3.5996117462953032,3.466796084568232) -- (-3.542690748412273,3.6280722452368193);
\draw [line width=0.8pt,color=ffqqqq] (-3.5996117462953032,3.466796084568232) -- (-3.457309251587726,3.3719277547631803);
\draw [line width=0.8pt,color=ffqqqq] (-2.,4.)-- (-5.,0.);
\draw [line width=0.8pt,color=ffqqqq] (-3.563,1.916) -- (-3.608,2.081);
\draw [line width=0.8pt,color=ffqqqq] (-3.563,1.916) -- (-3.392,1.919);
\draw [line width=0.8pt,color=ffqqqq] (-2.,1.)-- (-5.,0.);
\draw [line width=0.8pt,color=ffqqqq] (-3.5996117462953032,0.46679608456823174) -- (-3.542690748412273,0.6280722452368194);
\draw [line width=0.8pt,color=ffqqqq] (-3.5996117462953032,0.46679608456823174) -- (-3.457309251587726,0.37192775476318035);
\draw [line width=0.4pt] (1.44,5.62)-- (2.6,5.6);
\draw [line width=0.4pt] (2.6,5.6)-- (2.6,-0.6);
\draw [line width=0.4pt] (2.6,-0.6)-- (1.4,-0.6);
\draw [line width=0.4pt] (1.4,-0.6)-- (1.44,5.62);
\draw [line width=0.4pt] (4.4,5.6)-- (5.6,5.6);
\draw [line width=0.4pt] (5.6,5.6)-- (5.6,-0.6);
\draw [line width=0.4pt] (5.6,-0.6)-- (4.4,-0.6);
\draw [line width=0.4pt] (4.4,-0.6)-- (4.4,5.6);
\draw [line width=0.8pt,color=qqqqff] (5.,5.)-- (5.,4.);
\draw [line width=0.8pt,color=qqqqff] (5.,4.395) -- (4.865,4.5);
\draw [line width=0.8pt,color=qqqqff] (5.,4.395) -- (5.135,4.5);
\draw [line width=0.8pt,color=qqqqff] (5.,1.)-- (5.,0.);
\draw [line width=0.8pt,color=qqqqff] (5.,0.395) -- (4.865,0.5);
\draw [line width=0.8pt,color=qqqqff] (5.,0.395) -- (5.135,0.5);
\draw [line width=0.8pt,color=ffqqqq] (5.,1.)-- (2.,0.);
\draw [line width=0.8pt,color=ffqqqq] (3.4003882537046968,0.46679608456823174) -- (3.4573092515877275,0.6280722452368194);
\draw [line width=0.8pt,color=ffqqqq] (3.4003882537046968,0.46679608456823174) -- (3.542690748412273,0.37192775476318035);
\draw [line width=0.8pt,color=ffqqqq] (5.,4.)-- (2.,0.);
\draw [line width=0.8pt,color=ffqqqq] (3.437,1.916) -- (3.392,2.081);
\draw [line width=0.8pt,color=ffqqqq] (3.437,1.916) -- (3.608,1.919);
\draw [line width=0.8pt,color=ffqqqq] (5.,4.)-- (2.,3.);
\draw [line width=0.8pt,color=ffqqqq] (3.4003882537046968,3.466796084568232) -- (3.4573092515877275,3.6280722452368193);
\draw [line width=0.8pt,color=ffqqqq] (3.4003882537046968,3.466796084568232) -- (3.542690748412273,3.3719277547631803);
\draw [line width=0.8pt,color=ffqqqq] (5.,5.)-- (2.,3.);
\draw [line width=0.8pt,color=ffqqqq] (3.4126347190945276,3.9417564793963504) -- (3.425115473509594,4.112326789735608);
\draw [line width=0.8pt,color=ffqqqq] (3.4126347190945276,3.9417564793963504) -- (3.5748845264904063,3.8876732102643903);
\draw [line width=0.8pt,color=ffqqqq] (5.,5.)-- (2.,4.);
\draw [line width=0.8pt,color=ffqqqq] (3.4003882537046968,4.466796084568232) -- (3.4573092515877275,4.628072245236819);
\draw [line width=0.8pt,color=ffqqqq] (3.4003882537046968,4.466796084568232) -- (3.542690748412273,4.37192775476318);
\draw [line width=0.8pt,color=ffqqqq] (5.,5.)-- (2.,0.);
\draw [line width=0.8pt,color=ffqqqq] (3.445977945680112,2.409963242800183) -- (3.384238455028809,2.5694569269827165);
\draw [line width=0.8pt,color=ffqqqq] (3.445977945680112,2.409963242800183) -- (3.6157615449711957,2.4305430730172843);
\draw [line width=0.8pt,color=qqqqff] (-5.,5.)-- (-5.,4.);
\draw [line width=0.8pt,color=qqqqff] (-5.,4.395) -- (-5.135,4.5);
\draw [line width=0.8pt,color=qqqqff] (-5.,4.395) -- (-4.865,4.5);
\draw [line width=0.8pt,color=qqqqff] (-5.,4.)-- (-5.,3.);
\draw [line width=0.8pt,color=qqqqff] (-5.,3.395) -- (-5.135,3.5);
\draw [line width=0.8pt,color=qqqqff] (-5.,3.395) -- (-4.865,3.5);
\draw [line width=0.8pt,color=qqqqff] (-5.,0.8)-- (-5.,0.);
\draw [line width=0.8pt,color=qqqqff] (-5.,0.295) -- (-5.135,0.4);
\draw [line width=0.8pt,color=qqqqff] (-5.,0.295) -- (-4.865,0.4);
\draw [line width=0.8pt,color=qqqqff] (2.,5.)-- (2.,4.);
\draw [line width=0.8pt,color=qqqqff] (2.,4.395) -- (1.865,4.5);
\draw [line width=0.8pt,color=qqqqff] (2.,4.395) -- (2.135,4.5);
\draw [line width=0.8pt,color=qqqqff] (2.,4.)-- (2.,3.);
\draw [line width=0.8pt,color=qqqqff] (2.,3.395) -- (1.865,3.5);
\draw [line width=0.8pt,color=qqqqff] (2.,3.395) -- (2.135,3.5);
\draw [line width=0.8pt,color=qqqqff] (-5.,3.)-- (-5.,2.4);
\draw [line width=0.8pt,color=qqqqff] (-5.,2.595) -- (-5.135,2.7);
\draw [line width=0.8pt,color=qqqqff] (-5.,2.595) -- (-4.865,2.7);
\draw [line width=0.8pt,color=qqqqff] (5.,4.)-- (5.,3.4);
\draw [line width=0.8pt,color=qqqqff] (5.,3.595) -- (4.865,3.7);
\draw [line width=0.8pt,color=qqqqff] (5.,3.595) -- (5.135,3.7);
\draw [line width=0.8pt,color=qqqqff] (2.,0.8)-- (2.,0.);
\draw [line width=0.8pt,color=qqqqff] (2.,0.295) -- (1.865,0.4);
\draw [line width=0.8pt,color=qqqqff] (2.,0.295) -- (2.135,0.4);
\draw [line width=0.8pt,color=qqqqff] (2.,3.)-- (2.,2.4);
\draw [line width=0.8pt,color=qqqqff] (2.,2.595) -- (1.865,2.7);
\draw [line width=0.8pt,color=qqqqff] (2.,2.595) -- (2.135,2.7);
\draw [shift={(0.,-4.)},line width=0.8pt,color=ffqqqq]  plot[domain=1.0906212971913698:2.0526087419383687,variable=\t]({1.*10.824047302187846*cos(\t r)+0.*10.824047302187846*sin(\t r)},{0.*10.824047302187846*cos(\t r)+1.*10.824047302187846*sin(\t r)});
\draw [shift={(1.58,-2.02)},line width=0.8pt,color=ffqqqq]  plot[domain=1.148925040371887:1.9948500778320195,variable=\t]({1.*8.352293098305397*cos(\t r)+0.*8.352293098305397*sin(\t r)},{0.*8.352293098305397*cos(\t r)+1.*8.352293098305397*sin(\t r)});
\draw [shift={(-1.56,8.76)},line width=0.8pt,color=ffqqqq]  plot[domain=4.3601909854188765:5.062703192620091,variable=\t]({1.*9.972121138453945*cos(\t r)+0.*9.972121138453945*sin(\t r)},{0.*9.972121138453945*cos(\t r)+1.*9.972121138453945*sin(\t r)});
\draw [line width=1.6pt,color=ffqqqq] (-1.6550560857493173,-1.210560820792924) -- (-1.520718202337079,-1.0450212995893147);
\draw [line width=1.6pt,color=ffqqqq] (-1.6550560857493173,-1.210560820792924) -- (-1.519396135938303,-1.3750186513066236);
\draw [line width=1.6pt,color=ffqqqq] (0.30378042556929596,6.817981720282189) -- (0.44535809817232364,6.977373572709816);
\draw [line width=1.6pt,color=ffqqqq] (0.30378042556929596,6.817981720282189) -- (0.43198082099775564,6.647644822893553);
\draw [line width=1.6pt,color=ffqqqq] (0.36177576679162954,6.238943795997798) -- (0.4741970029940582,6.420083150227401);
\draw [line width=1.6pt,color=ffqqqq] (0.36177576679162954,6.238943795997798) -- (0.5170665721845218,6.092879543833119);
\draw [line width=0.8pt,color=qqqqff] (5.,1.8)-- (5.,1.);
\draw [line width=0.8pt,color=qqqqff] (5.,1.295) -- (4.865,1.4);
\draw [line width=0.8pt,color=qqqqff] (5.,1.295) -- (5.135,1.4);
\begin{scriptsize}
\draw [fill=black] (-5.,5.) circle (2.5pt);
\draw[color=black] (-5.2,5.1) node {$0$};
\draw [fill=black] (-5.,4.) circle (1.5pt);
\draw[color=black] (-5.2,4.1) node {$2$};
\draw [fill=black] (-5.,0.) circle (1.5pt);
\draw[color=black] (-5.1,-0.2) node {$2n_1-2$};
\draw [fill=black] (-2.,5.) circle (1.5pt);
\draw[color=black] (-1.8,5.1) node {$1$};
\draw [fill=black] (-2.,4.) circle (1.5pt);
\draw[color=black] (-1.8,4.1) node {$3$};
\draw [fill=black] (-2.,1.) circle (1.5pt);
\draw[color=black] (-1.9,1.29) node {$2n_1-3$};
\draw [fill=black] (-2.,0.) circle (1.5pt);
\draw[color=black] (-1.9,-0.2) node {$2n_1-1$};
\draw [fill=black] (-5.,3.) circle (1.5pt);
\draw[color=black] (-5.2,3.1) node {$4$};
\draw [fill=black] (2.,5.) circle (1.5pt);
\draw[color=black] (1.9,5.2) node {$2s_2-1$};
\draw [fill=black] (2.,4.) circle (1.5pt);
\draw[color=black] (1.8,4.2) node {$2s_2-3$};
\draw [fill=black] (2.,3.) circle (1.5pt);
\draw [fill=black] (2.,0.) circle (1.5pt);
\draw[color=black] (1.9,-0.2) node {$2n_1+1$};
\draw [fill=black] (5.,5.) circle (1.5pt);
\draw[color=black] (5,5.2) node {$2s_2-2$};
\draw [fill=black] (5.,4.) circle (1.5pt);
\draw[color=black] (5,4.2) node {$2s_2-4$};
\draw [fill=black] (5.,1.) circle (1.5pt);
\draw[color=black] (5,1.2) node {$2n_1+2$};
\draw [fill=black] (5.,0.) circle (1.5pt);
\draw[color=black] (4.9,-0.2) node {$2n_1$};
\draw [fill=black] (-5.,2.) circle (0.5pt);
\draw [fill=black] (-5.,1.6) circle (0.5pt);
\draw [fill=black] (-5.,1.2) circle (0.5pt);
\draw [fill=black] (-2.,3.2) circle (0.5pt);
\draw [fill=black] (-2.,2.6) circle (0.5pt);
\draw [fill=black] (-2.,1.8) circle (0.5pt);
\draw [fill=black] (2.,2.) circle (0.5pt);
\draw [fill=black] (2.,1.6) circle (0.5pt);
\draw [fill=black] (2.,1.2) circle (0.5pt);
\draw [fill=black] (5.,2.2) circle (0.5pt);
\draw [fill=black] (5.,3.) circle (0.5pt);
\draw [fill=black] (5.,2.6) circle (0.5pt);
\draw[color=black] (-3.1,-0.95) node {Le poset $R'_{2n_1}$};
\draw[color=black] (3.5,-0.95) node {Le poset critique $t_{2n_1}((R_{2n_2})^{\star})$};
\end{scriptsize}
\end{tikzpicture}
\caption{\textbf{Le poset (-1)-critique $O_{2n_1,2n_2}$} (0 est le sommet non critique).}  
\label{poset (-1)-crit O}
\end{center} 
\end{figure}
 
\begin{Fait}\label{O est -1critique} Pour tous entiers  $n_1$ et $n_2$ tels que $n_1\geqslant 2$ et $n_2\geqslant 1$,  le digraphe $O_{ 2n_1,  2n_2}$ est une orientation transitive du graphe $\overline{G'_{2n_1,2n_2}}$. Le graphe (-1)-critique $\overline{G'_{2n_1,2n_2}}$ est alors un graphe de comparabilité.
 \end{Fait}

Introduisons maintenant, les familles suivantes: 
 $$\mathcal{Q} = \{Q_{2n_1, \ldots, 2n_k}: \ k \geqslant 3 \ \textnormal{et pour tout} \ i \in \{1, \ldots, k\},  \ n_i \geqslant 1\}, $$
$$\mathcal{Q'} = \{Q'_{2n_1, \ldots, 2n_k}: \ k \geqslant 2, \ n_1 \geqslant 2, \ \textnormal{et pour tout} \ i \in \{2, \ldots, k\}, \  n_i \geqslant 1\}, $$ 
$$\mathcal{O} = \{O_{ 2n_1, 2n_2}:    n_1 \geqslant 2, n_2 \geqslant 1\}$$
$$\mathcal{R} = \{R_{1, 2n_1, 2n_2}:    n_1 \geqslant 1, n_2 \geqslant 1\} \; {\rm \ et} \; \mathcal{R'} = \{R'_{1, 2n_1, 2n_2}:    n_1 \geqslant 1, n_2 \geqslant 1\}.$$
Rappelons enfin, le résultat suivant.
\begin{lemme}\cite{livre de Golumbic}\label{exactement deux orientations transi} Soit $G$ un graphe de comparabilité. Si $G$ est indécomposable, alors $G$ admet exactement deux orientations transitives, l'une étant le dual de l'autre.
\end{lemme}

D'après Corollaire~\ref{corollaire des posets (-1)-critiques}, Théorème~\ref{graphes (-1)-critiques}, les lemmes~\ref{k ne depasse pas 3 pour H} et \ref{exactement deux orientations transi}, et des  faits~\ref{le post -1critique Q},  \ref{le post -1critique Q'},  \ref{R est un poset -1critique}, \ref{R' est -1critique} et \ref{O est -1critique}, nous obtenons la caractérisation morphologique suivante des posets (-1)-critiques.

\begin{theoreme} \label{poset (-1)-critiques}
 \`A isomorphisme pr\`es, les posets (-1)-critiques sont les digraphes de $\mathcal{Q} \cup \mathcal{Q'}\cup \mathcal{O}\cup \mathcal{R}\cup \mathcal{R'}$ et leurs duaux, le sommet $0$ \'etant l'unique sommet non critique de chacun de ces posets.
\end{theoreme}

\newpage

 \end{document}